\documentclass[12pt,reqno]{amsart}
\usepackage{amscd,amssymb,amsmath,amsfonts,amsthm,latexsym}
\usepackage[english]{babel}
\usepackage{inputenc}
\usepackage{xcolor}
\tolerance=5000 \topmargin -1cm \oddsidemargin=0,5cm
\evensidemargin=-0,2cm \textwidth 15.6cm \textheight 24cm
\linespread{1.0}
\newtheorem{thm}{Theorem}[section]
\newtheorem{cor}[thm]{Corollary}
\newtheorem{lem}[thm]{Lemma}
\newtheorem{prop}[thm]{Proposition}
\newtheorem{defn}[thm]{Definition}

\numberwithin{equation}{section}

\DeclareMathOperator{\Der}{Der}
\DeclareMathOperator{\Hom}{Hom}
\DeclareMathOperator{\id}{id}
\DeclareMathOperator{\Ker}{Ker}
\DeclareMathOperator{\Ima}{Im}
\DeclareMathOperator{\GL}{GL}
\DeclareMathOperator{\Mat}{Mat}

\begin{document}

\title{On the second cohomology group of simple Leibniz algebras}
\author{J.Q. Adashev}
\address{[J.Q. Adashev] Institute of Mathematics, National University of Uzbekistan, Tashkent, 100125, Uzbekistan.}
\email{adashevjq@mail.ru}
\author{M. Ladra}
\address{[M. Ladra] Department of Algebra, University of Santiago de Compostela, 15782, Spain.}
\email{manuel.ladra@usc.es}
\author{B.A. Omirov}
\address{[B.A. Omirov] Institute of Mathematics, National University of Uzbekistan, Tashkent, 100125, Uzbekistan.}
\email{omirovb@mail.ru}

\begin{abstract}
In this paper we prove some general results on Leibniz 2-cocycles for simple Leibniz algebras.
 Applying these results we establish the triviality of the second Leibniz cohomology for a simple Leibniz algebra with coefficients in itself,
  whose associated Lie algebra is isomorphic to $\mathfrak{sl}_2$.
\end{abstract}

\subjclass[2010]{17A32, 17A36, 17B20, 17B40, 17B56,  16E40, 57T10}
\keywords{Lie algebra, Leibniz algebra, derivation, gradation, second group of cohomology, simple algebra, irreducible module}
\maketitle

\section{Introduction}

In the past years Leibniz algebras have been under active investigation for several reasons. These algebras preserve a property of Lie algebras
 that operators of right multiplication are inner derivations. Another motivation to study Leibniz algebras is that if a Leibniz algebra admits
  the additional property $[x, x]=0$ for any element of the algebra, then Leibniz identity coincides with Jacobi identity. Therefore, Lie algebras are generalized by Leibniz algebras.

From classical theory of finite-dimensional Lie algebras it is known that an arbitrary Lie algebra is decomposed into a semidirect sum
 of the solvable radical and its semisimple subalgebra (Levi's theorem) \cite{Jac}. Classification of semisimple Lie algebras has been known
  since the works of Cartan and Killing. According to the Cartan-Killing theory, a semisimple Lie algebra can be represented as a direct sum
   of simple ideals, which are completely classified.

Recently, Barnes proved an analogue of Levi's theorem for the case of Leibniz algebras \cite{Bar4}. Namely, a Leibniz algebra decomposes
 into a semidirect sum of its solvable radical and a semisimple Lie algebra. Since the semisimple part can be described from simple Lie ideals,
  one of the main problems of description of finite-dimensional Leibniz algebras consists of the study of solvable Leibniz algebras.

The inherent properties of non-Lie Leibniz algebras imply that the subspace spanned by squares of elements of the algebra is a non-trivial ideal
 (later on denoted by $I$). Moreover, the ideal $I$ is abelian and hence, it belongs to solvable radical. It is the minimal ideal with respect to the
  property of that the quotient of the algebra modulo an ideal  is a Lie algebra.

Moreover, Leibniz algebras with simple quotient by the ideal $I$ are completely described by means of simple Lie algebras and their irreducible modules.
 Practically, for such Leibniz algebras the space $I$  is a direct sum of irreducible modules.

Since in a non Lie Leibniz algebra the ideal $I$ is always non trivial, the usual notion of simplicity is not applicable for Leibniz algebras.
 Therefore, Dzhumadil'daev  in \cite{AbDz} proposed a definition of simple Leibniz algebra as an algebra which does not admit non-trivial
  ideals except $I$ and the square of an algebra is not equal to $I$.

Due to Barnes's result a simple Leibniz algebra decomposes into a  semidirect sum of simple Lie algebra and its irreducible right module.
 Later on, the notion of semisimplicity was adapted for Leibniz algebras as an algebra whose solvable radical coincides with the ideal $I$.
  These notions are in agreement with corresponding analogues in Lie algebra case.

Although semisimplicity of a Leibniz algebra $L$ is equivalent to semisimplicity of the quotient algebra of $L$ modulo the ideal $I$,
 nevertheless the result about the decomposition of a semisimple Leibniz algebra into a direct sum of simple ones is not true.

Note that Leibniz algebras of the same dimension form a variety under natural action of the linear group $\GL_n$.
 From algebraic geometry it is known that an algebraic variety is a union of irreducible components. Algebras with open orbits
  under this action in the variety of Leibniz algebras are called rigid. Closure of these open orbits provides irreducible components
   of the variety. Hence finding rigid algebras is a crucial problem from the geometrical point of view. It is well known that
    the triviality of the second group of cohomology for a Lie algebra with coefficients in itself implies rigidness of an algebra
    and the inverse statement is not true in general. Similar to the case of Lie algebras, Balavoine proved the general principles
     for deformations and rigidity of Leibniz algebras \cite{Bal}.

In this paper we investigate properties of the second group of cohomology for (semi)simple Leibniz algebras.
 Using the construction of a gradation of semisimple Leibniz algebras we induce a gradation on the space of 2-cocycles
 and prove some general results concerning homogeneous 2-cocycles. In fact, this approach allows us to use Whitehead's second lemma
  on triviality of the second group of cohomology for semisimple Lie algebras \cite{Jac}. As an application of general results
  we prove the triviality of the second group of cohomology for simple Leibniz algebras with associated Lie algebra $ \mathfrak{sl}_2$, which implies its rigidness.

\section{Preliminaries}

In this section we give some necessary definitions and preliminary results.

\begin{defn} [\cite{Lod2}] An algebra $(L,[\cdot,\cdot])$ over a field $F$ is called a
Leibniz algebra if for any $x,y,z\in L$ the so-called Leibniz identity
\[[x,[y,z]]=[[x,y],z] - [[x,z],y]\] holds.
\end{defn}

Let $L$ be a Leibniz algebra and let $I=\text{ideal} \left< [x,x]   \mid  x\in L
\right>$ be the ideal of $L$ generated by all squares. The natural
epimorphism $\varphi  \colon L \rightarrow   L/I$ determines the
associated Lie algebra $L/I$ of the Leibniz algebra $L$. It is
clear that the ideal $I$ is the minimal ideal with respect to the
property that the quotient by this ideal is a Lie algebra. In fact, the ideal $I$ coincides with the space spanned by squares of elements of an algebra \cite{Bar4}.

\begin{defn} [\cite{AbDz}]
A Leibniz algebra $L$ is called simple if its only ideals are $\{0\}, I, L$ and moreover $[L,L]\neq I$.
\end{defn}

In the next theorem we present the  multiplication table of a simple Leibniz algebra whose associated Lie algebra is $ \mathfrak{sl}_2$.

\begin{thm}[\cite{ORT}] \label{simple} Let $L$ be a complex $(m+4)$-dimensional $(m\geq 2)$ simple Leibniz algebra with associated Lie algebra
 $ \mathfrak{sl}_2$. Then it admits a basis $\{e,f,h,x_0,x_1,\dots,x_m\}$ such that non zero products of the basis vectors in $L$ are represented as follows:
\[\begin{array}{lll}
\, [x_k,e]=-k(m+1-k)x_{k-1}, & k=1, \dots, m. &\\
\, [x_k,f]=x_{k+1},  & k=0, \dots, m-1, & \\
\, [x_k,h]=(m-2k)x_k & k=0, \dots, m,&\\
\, [e,h]=2e, & [h,f]=2f, &[e,f]=h, \\
\, [h,e]=-2e& [f,h]=-2f, & [f,e]=-h.\\
 \end{array}\]
\end{thm}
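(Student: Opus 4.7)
The plan is to combine Barnes's Levi decomposition for Leibniz algebras with the classical representation theory of $\mathfrak{sl}_2$. By \cite{Bar4}, $L$ admits a semidirect decomposition $L = S \ltimes I$ with $S$ a semisimple Lie subalgebra that projects isomorphically onto $L/I$; since $L/I \cong \mathfrak{sl}_2$, I identify $S$ with $\mathfrak{sl}_2$. Applying the Leibniz identity to $[y, [x,x]]$ yields $[y, [x,x]] = [[y,x], x] - [[y,x], x] = 0$, so $[L, I] = 0$. Consequently $I$ is abelian and the bracket $(i, s) \mapsto [i, s]$ endows $I$ with a well-defined right $\mathfrak{sl}_2$-module structure.

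I would then argue that simplicity of $L$ forces $I$ to be a \emph{simple} right $\mathfrak{sl}_2$-module. Indeed, any $\mathfrak{sl}_2$-submodule $J \subseteq I$ is automatically a two-sided ideal of $L$: we have $[J, L] = [J, S] + [J, I] \subseteq J$ (using that $I$ is abelian and $J$ is $S$-invariant), while $[L, J] \subseteq [L, I] = 0 \subseteq J$. The definition of simple Leibniz algebra leaves only $J = 0$ or $J = I$. The classification of finite-dimensional irreducible $\mathfrak{sl}_2$-modules now forces $I \cong V_m$, the unique $(m+1)$-dimensional simple module, giving the dimension count $\dim L = 3 + (m+1) = m+4$.

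To derive the explicit multiplication table, I would fix a Chevalley-style basis $\{e,f,h\} \subset S$ satisfying the stated Lie-algebra relations, pick a highest-weight vector $x_0 \in I$ with $[x_0, h] = m x_0$ and $[x_0, e] = 0$, and define a weight basis inductively by $x_{k+1} := [x_k, f]$ for $k = 0, \dots, m-1$. The weight relation $[x_k, h] = (m-2k) x_k$ follows by induction, using the Leibniz identity applied to $[x_{k-1}, [f, h]]$ together with $[f, h] = -2f$. The coefficient $-k(m+1-k)$ in $[x_k, e] = -k(m+1-k) x_{k-1}$ is then forced by a parallel induction: expand $[x_{k-1}, [f, e]] = [x_{k-1}, -h]$ via the Leibniz identity and read off the scalar. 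These steps reproduce, up to the sign dictated by the right-action convention, the standard normalization of the irreducible $\mathfrak{sl}_2$-representation $V_m$, and they show the multiplication is \emph{uniquely} determined by $S$ and the module $I$.

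The main point requiring genuine verification is the lower bound $m \geq 2$. For $m = 0$ the module $I$ is the trivial one-dimensional representation, so every square $[x,x] = [i, s]$ lies in $[I, S] = 0$; consequently $I = 0$ and $L$ is just the Lie algebra $\mathfrak{sl}_2 \oplus F$, which is excluded by the non-Lie-simple hypothesis. Ruling out $m = 1$ requires a separate check of the simplicity conditions (including $[L, L] \neq I$) against the isomorphism classes tabulated in \cite{ORT} for small dimensions; I expect this boundary case, rather than the general representation-theoretic argument, to be the only non-formal step in the proof.
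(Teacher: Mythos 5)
The paper does not prove this statement at all: Theorem~\ref{simple} is imported verbatim from the reference \cite{ORT}, so there is no internal proof to compare against. Your reconstruction is essentially the standard argument (and, as far as I can tell, the one underlying \cite{ORT}): Barnes's Levi decomposition gives $L=S\dot{+}I$ with $S\cong L/I\cong\mathfrak{sl}_2$; the Leibniz identity gives $[y,[x,x]]=[[y,x],x]-[[y,x],x]=0$, hence $[L,I]=0$ since $I$ is spanned by squares; your observation that every $S$-submodule of $I$ is then a two-sided ideal correctly forces $I$ to be the irreducible right module $V_m$; and the weight-basis induction reproduces the stated structure constants (one can check, e.g., that $R_{[e,f]}=R_fR_e-R_eR_f$ acts on $x_k$ by $-k(m+1-k)+(k+1)(m-k)=m-2k$, consistent with the table). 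The one miscalibration is your final paragraph: $m$ is defined by $\dim L=m+4$ and the bound $m\geq 2$ is a \emph{hypothesis} of the theorem, not a conclusion, so there is nothing to rule out --- the cases $m=0,1$ are simply outside the scope of the statement (your $m=0$ sanity check is fine, but no appeal to the low-dimensional tables of \cite{ORT} is needed). Beyond that, only cosmetic care is required with the right-action sign convention ($g\mapsto R_g$ is an anti-homomorphism, so the highest-weight normalization differs by signs from the usual left-module picture), which you already flag.
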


For a given Leibniz algebra $L$ we define derived series as follows:
\[
L^1=L,\qquad L^{[n+1]}=[L^{[n]},L^{[n]}], \quad n \geq 1.
\]

\begin{defn} A Leibniz algebra $L$ is called
solvable if there exists an  $m\in\mathbb N$ such that $L^{[m]}=0$.
\end{defn}

Similar to the Lie algebras case, the sum of solvable ideals of Leibniz algebra is also solvable. The maximal solvable ideal is called the \emph{solvable radical} of the Leibniz algebra.

\begin{defn} A Leibniz algebra $L$ is called semisimple if its solvable radical is equal to the ideal $I$.
\end{defn}

Clearly, this definition  agrees with the definition of semisimplicity of a Lie algebra. Note that simple and a direct sum of simple Leibniz algebras
 are examples of semisimple Leibniz algebra. Unfortunately, there exist semisimple Leibniz algebras which do not decompose
  into a direct sum of simple algebras (for an example see \cite{GKO}).

The analogue of Levi's theorem from Lie theory was proved for the left Leibniz algebras, it  also holds for right Leibniz algebras (here we consider right Leibniz algebras).

\begin{thm}[Levi's Theorem \cite{Bar4}] \label{thmBarnes}
Let $L$ be a finite dimensional Leibniz algebra over a field of
characteristic zero and $R$  its solvable radical. Then there
exists a semisimple subalgebra $S$ of $L$, such that $L$ is the semidirect sum of  $S$ and $R$,
$L=S\dot{+}R$.
\end{thm}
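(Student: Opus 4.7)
My plan is to reduce the Leibniz Levi theorem to the classical Levi theorem for Lie algebras and then lift the resulting semisimple factor past the ideal $I$ of squares of $L$, using a cohomological splitting argument. Since $I$ is abelian it lies inside $R$, so the quotient $L/I$ is a Lie algebra whose solvable radical is $R/I$. Applying classical Levi to $L/I$ yields a semisimple Lie subalgebra $\bar S\subseteq L/I$ with $L/I=\bar S\oplus(R/I)$. Writing $\pi\colon L\to L/I$ for the projection and $M=\pi^{-1}(\bar S)$, I get a short exact sequence of Leibniz algebras
\[ 0\to I\to M\to \bar S\to 0, \]
and the task becomes to split it: a section $\sigma\colon\bar S\to M$ would produce a subalgebra $S=\sigma(\bar S)\subseteq L$, and using $L=M+R$, $R\cap M=I$, together with $S\cap I=0$ (since $\sigma$ is a section), one then concludes $L=S\,\dot{+}\,R$ with $S\cong\bar S$ semisimple.

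The splitting hinges on the structural identity $[L,I]=0$. Indeed, the Leibniz identity applied with a repeated entry gives $[x,[y,y]]=[[x,y],y]-[[x,y],y]=0$, and since $I$ is spanned by squares this shows that the left action of $L$ on $I$ is trivial. Consequently $I$ is an antisymmetric Leibniz bimodule over $\bar S$, equivalently a classical Lie $\bar S$-module, and equivalence classes of extensions of $\bar S$ by $I$ in the category of Leibniz algebras coincide with equivalence classes of extensions in the category of Lie algebras. Whitehead's second lemma for the finite-dimensional semisimple Lie algebra $\bar S$ gives $H^{2}_{\mathrm{Lie}}(\bar S,I)=0$, so the extension splits and the desired section exists.

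The main obstacle is precisely this cohomological reduction: recognising that the Leibniz 2-cocycle problem for the pair $(\bar S,I)$ collapses to the Chevalley--Eilenberg 2-cocycle problem because the left action of $\bar S$ on $I$ is trivial. Without this observation one would need a Whitehead-type vanishing directly in Leibniz cohomology, which is substantially more delicate for arbitrary bimodules. The remaining bookkeeping — that $M$ is closed under the bracket, that $I$ is $\bar S$-stable, and that the image of a Leibniz-algebra section is automatically a Lie subalgebra — is routine, following from the fact that $I$ is an ideal together with the identity $[L,I]=0$.
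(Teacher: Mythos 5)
The paper does not prove this statement: it is imported verbatim from Barnes \cite{Bar4}, so there is no internal proof to compare against, and I can only judge your argument on its own terms. The framework is sound: $I\subseteq R$, the passage to the Lie algebra $L/I$ and classical Levi, the identity $[x,[y,y]]=0$ giving $[L,I]=0$ (hence $I$ abelian and antisymmetric as a bimodule), and the final bookkeeping recovering $L=S\,\dot{+}\,R$ from a section are all correct.

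The genuine gap is the central claim that, because the left action of $\bar S$ on $I$ is trivial, Leibniz extensions of $\bar S$ by $I$ coincide with Lie extensions, so that $H^2_{\mathrm{Lie}}(\bar S,I)=0$ finishes the proof. This is false. A Leibniz $2$-cocycle $\bar S\otimes\bar S\to I$ need not be skew-symmetric, and a Leibniz extension of a Lie algebra by an antisymmetric module need not be a Lie algebra --- the algebras studied in this very paper, $L=\mathfrak{sl}_2\,\dot{+}\,I$, are precisely such extensions and are non-Lie. Concretely, your $M=\pi^{-1}(\bar S)$ generally has nonzero Leibniz kernel contained in $I$, so the extension $0\to I\to M\to\bar S\to 0$ does not live in the category of Lie algebras at all, and its splitting obstruction lies in $HL^2(\bar S,I)$, not in $H^2_{\mathrm{Lie}}(\bar S,I)$. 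The repair is Pirashvili's comparison theorem (Theorem~\ref{thmpirashvili} of the paper): if $H^*(\bar S,I)=0$ then $HL^*(\bar S,I)=0$. But that hypothesis demands vanishing in all degrees, hence that $I$ have no trivial $\bar S$-summand (note $H^0(\bar S,I)=I^{\bar S}$ and $H^3(\bar S,\mathbb{C})\neq 0$ for $\bar S$ simple), which you have not ruled out; handling a possible trivial isotypic component requires a separate argument, or Barnes's actual inductive reduction to a minimal ideal in the radical. You correctly flagged this step as the main obstacle, but the observation you use to dispose of it does not hold.
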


In fact, the subalgebra $S$ in Theorem~\ref{thmBarnes} is a semisimple Lie algebra. Therefore, we have that a semisimple (simple) Leibniz algebra
is a semidirect sum of a semisimple (respectively, simple) Lie algebra $S$ and its (respectively, irreducible) right module $I$, that is, $L=S\dot{+}I$.
 Hence, we get the description of semisimple (respectively, simple) Leibniz algebras in terms of semisimple (respectively, simple) Lie algebras and their ideals $I$.

Derivations of a Leibniz algebra are defined in the usual sense.
\begin{defn}
A linear transformation $d$ of a Leibniz algebra $L$ is called a
derivation if for any $x, y\in L$,
\[d([x,y])=[d(x),y]+[x, d(y)].\]
\end{defn}
\smallskip

Consider for an arbitrary element $x\in L$ the operator of right multiplication $R_x \colon L\to L$, defined by $R_x(z)=[z,x]$. Operators of right multiplication are derivations of the Leibniz algebra $L$.

%\begin{prop} \label{derivationsl2} \cite{derivation} Let $d$ be a derivation of complex simple Leibniz algebra %$L= \mathfrak{sl}_2\dot{+}I$. Then it has the following form:

%$\begin{array}{ll}
%d(e)=a_{1,1}e+a_{1,3}h,&\\
%d(f)=-a_{1,1}f+a_{2,3}h,&\\
%d(h)=2a_{2,3}e+2a_{1,3}f,&\\
%d(x_{0})=d_{0,0}x_{0}+a_{1,3}x_1,&\\
%d(x_k)=k(m+1-k)a_{2,3}x_{k-1}+(d_{0,0}-ka_{1,1})x_k+a_{1,3}x_{k+1},& 1\leq k\leq m-1,\\
%d(x_m)=ma{2,3}x_{m-1}+(d_{0,0}-ma_{1,1})x_m, &\\
%\end{array}$ \\
%where $a_{1,1}, a_{1,3}, a_{2,3}, d_{0,0}\in \mathbb{C}$
%\end{prop}

We call a vector space $M$ a {\it module over a Leibniz algebra} $L$ if there are two bilinear maps:
\[ [-,-] \colon L\times M \rightarrow M \qquad \text{and} \qquad [-,-] \colon M\times L \rightarrow M \]
satisfying the following three axioms
\begin{align*}
[m,[x,y]] & =[[m,x],y]-[[m,y],x],\\
[x,[m,y]] & =[[x,m],y]-[[x,y],m],\\
[x,[y,m]] & =[[x,y],m]-[[x,m],y],
\end{align*}
for any $m\in M$, $x, y \in L$.

For a Leibniz algebra $L$ and a module $M$ over $L$ we consider the spaces
\[CL^0(L,M) = M, \qquad CL^n(L,M)=\Hom(L^{\otimes n}, M), \ n > 0.\]

Let $d^n \colon CL^n(L,M) \rightarrow CL^{n+1}(L,M)$ be the
$F$-homomorphism defined by
 \begin{multline*}
(d^nf)(x_1, \dots , x_{n+1}): = [x_1,f(x_2,\dots,x_{n+1})]
+\sum\limits_{i=2}^{n+1}(-1)^{i}[f(x_1,
\dots, \widehat{x}_i, \dots , x_{n+1}),x_i]\\
+\sum\limits_{1\leq i<j\leq {n+1}}(-1)^{j+1}f(x_1, \dots,
x_{i-1},[x_i,x_j], x_{i+1}, \dots , \widehat{x}_j, \dots
,x_{n+1}),
\end{multline*}
 where $f\in CL^n(L,M)$ and $x_i\in L$. The property $d^{n+1}d^n=0$ leads that the derivative
operator $d=\sum\limits_{i \geq 0}d^i$ satisfies the property
$d\circ d = 0$. Therefore, the \emph{$n$-th cohomology group} is well defined by
\[HL^n(L,M): = ZL^n(L,M)/ BL^n(L,M),\]
where the elements $ZL^n(L,M):= \Ker d^{n}$ and $BL^n(L,M):=\Ima d^{n-1}$ are called {\it
$n$-cocycles} and {\it $n$-coboundaries}, respectively.

The elements $f\in BL^2(L,M)$ and $\varphi \in ZL^2(L,M)$ are
defined as follows
\begin{equation}\label{eqB2} f(x,y) = [d(x),y] + [x,d(y)] -
d([x,y]) \ \text{for some map} \ d\in \Hom(L, M), \end{equation}
\begin{equation}\label{eqZ2} [x,\varphi(y,z)]
 - [\varphi(x,y), z] + [\varphi(x,z), y] + \varphi(x, [y,z]) - \varphi([x,y],z) + \varphi([x,z],y)=0. \end{equation}

\begin{thm}[Whitehead's second lemma] \label{whitehead}  Let $G$ be a semisimple Lie algebra over a field of zero characteristic.
 Then $H^1(G,M)=H^2(G,M)=0$ for any finite-dimensional $G$-module $M$.
\end{thm}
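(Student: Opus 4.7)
The plan is to deduce both vanishing statements from the Levi--Malcev structure theory for finite-dimensional Lie algebras, via the well-known interpretation of low-degree Chevalley--Eilenberg cocycles as abelian extensions and as splittings.

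For $H^{2}(G,M)=0$, I would start from a $2$-cocycle $\varphi \in Z^{2}(G,M)$ and form the abelian extension $E = M \oplus G$ (direct sum of vector spaces) with bracket
\[
[(m,x),(n,y)] = \bigl(x\cdot n - y\cdot m + \varphi(x,y),\, [x,y]\bigr),
\]
the $2$-cocycle identity being exactly the Jacobi identity for this twisted bracket. Then $M$ is an abelian ideal of $E$ and $E/M \cong G$. The key structural observation is that the solvable radical of $E$ equals $M$: any solvable ideal projects to a solvable ideal of $G$, which must be zero by semisimplicity. Applying Levi's theorem to $E$ then produces a semisimple subalgebra $S$ with $E = S \dot{+} M$ and $S \to G$ an isomorphism via the projection. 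Writing the resulting section as $\sigma(x) = x + \eta(x)$ for a linear map $\eta\colon G \to M$, the requirement that $\sigma$ be a Lie homomorphism rearranges into $\varphi(x,y) = \eta([x,y]) - x\cdot\eta(y) + y\cdot\eta(x)$, exhibiting $\varphi$ as a coboundary.

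For $H^{1}(G,M)=0$, given a $1$-cocycle $f\in Z^{1}(G,M)$, the map $\sigma_{f}(x) = x + f(x)$ is a Lie algebra splitting of the trivial semidirect product $M\rtimes G \to G$, so $\sigma_{f}(G)$ is a Levi complement of $M$ in $M\rtimes G$. Malcev's conjugacy theorem for Levi subalgebras (valid in characteristic zero) guarantees that $\sigma_{f}(G)$ is conjugate to the standard complement $G$ by $\exp(\operatorname{ad} m)$ for some $m\in M$. Since $M$ is an abelian ideal, $[m,x]\in M$ and $[m,[m,x]]=0$, so $\exp(\operatorname{ad} m)(x) = x + [m,x]$. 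Equating this with $\sigma_{f}(x)$ yields $f(x) = [m,x] = -x\cdot m$, i.e., $f$ is the coboundary of $-m$.

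The main technical point in this approach is the radical computation for the extension $E$ in the $H^{2}$ step; that is where semisimplicity of $G$ enters in an essential way. Everything else is formal once Levi's theorem and Malcev's conjugacy theorem are taken as inputs, and no Casimir element arguments or explicit complete reducibility reductions are needed beyond what is already packaged into those structural theorems.
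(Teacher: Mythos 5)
The paper offers no proof of this statement: Theorem~\ref{whitehead} is quoted as a classical result with a reference to \cite{Jac}, so there is no internal argument to compare yours against. Judged on its own, your derivation is correct: the radical computation for $E$ is right (any solvable ideal of $E$ maps onto a solvable ideal of the semisimple quotient $G$, hence is contained in $M$), the identification of the Levi section $\sigma(x)=x+\eta(x)$ with a $1$-cochain satisfying $\varphi=-d\eta$ is the standard dictionary between $H^{2}$ and abelian extensions, and in the $H^{1}$ step a single exponential does suffice because $(\operatorname{ad}m)^{2}=0$ on $M\rtimes G$ when $M$ is abelian. But your route is genuinely different from the one in the cited source, which proceeds via the Casimir element and Weyl's complete reducibility, reducing to irreducible and trivial coefficients. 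The trade-off you should acknowledge is a near-circularity: Levi's theorem and Malcev's conjugacy theorem are themselves standardly proved by establishing precisely the special cases of $H^{2}$- and $H^{1}$-vanishing you deduce from them --- the key inductive step of Levi is the splitting of an abelian extension of a semisimple algebra (done with the Casimir operator), and Malcev's conjugacy in the abelian-radical case is literally the assertion that the difference $1$-cocycle between two complements is a coboundary. So your proof is a valid deduction once those two structure theorems are accepted as black boxes, but it relocates rather than supplies the analytic content; since the paper itself treats Whitehead's lemma as a black box, this is a stylistic rather than a mathematical objection.
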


Below, we present the cohomology version of the result on the relation of Lie homology and Leibniz homology for a Lie algebra with coefficients
 in a right module (see \cite[ Corollary 1.3]{Pir}).

Let $G$ be a Lie algebra and $M$  a right $G$-module.

\begin{thm} \label{thmpirashvili} If $H^*(G,M)=\{0\}$, then $HL^*(G,M)=\{0\}$.
\end{thm}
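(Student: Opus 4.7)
The plan is to deduce this cohomological statement from the homological comparison of Pirashvili \cite{Pir} by the standard duality between (co)homology with coefficients in a finite-dimensional module, which is the setting implicit in the paper.

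For a finite-dimensional right $G$-module $M$, the $F$-dual $M^{\ast} = \Hom_F(M, F)$ is a left $G$-module under $(x\cdot \phi)(m) := -\phi([m,x])$. Transposing the Chevalley--Eilenberg chain complex of $(G, M^{\ast})$ produces the Chevalley--Eilenberg cochain complex of $(G, M)$, and transposing the Loday chain complex $CL_{\ast}(G, M^{\ast}) = G^{\otimes \ast} \otimes M^{\ast}$ produces $CL^{\ast}(G, M) = \Hom_F(G^{\otimes \ast}, M)$ with precisely the coboundary displayed in the preliminaries. Consequently one obtains canonical isomorphisms
\[H^n(G, M) \cong H_n(G, M^{\ast})^{\ast}, \qquad HL^n(G, M) \cong HL_n(G, M^{\ast})^{\ast}.\]

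Granted these identifications, the argument is formal. Under the hypothesis $H^{\ast}(G, M) = 0$ the first isomorphism yields $H_{\ast}(G, M^{\ast}) = 0$; Pirashvili's Corollary~1.3 then gives $HL_{\ast}(G, M^{\ast}) = 0$; and the second isomorphism returns $HL^{\ast}(G, M) = 0$, which is the desired conclusion.

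The main obstacle is the careful verification that the transpose of the Loday boundary on $CL_{\ast}(G, M^{\ast})$, together with the left/right action swap, reproduces term by term the Leibniz coboundary formula for $d^n$. This is a bookkeeping check on the three types of summands: the outer bracket $[x_1, f(x_2, \dots)]$, the inserted bracket $[f(x_1, \dots, \widehat{x}_i, \dots), x_i]$, and the interior bracket-substitution terms $f(\dots, [x_i, x_j], \dots)$. Once this matching is in place, the theorem is a single formal step from \cite{Pir}. An alternative, which avoids passing to the dual module, would be to repeat Pirashvili's comparison construction directly in cohomology by mapping the Chevalley--Eilenberg complex into the Leibniz cochain complex as the subcomplex of antisymmetric cochains and analyzing the cokernel via the analogous spectral sequence, but the duality route is considerably shorter.
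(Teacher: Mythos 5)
The paper does not actually prove this statement: Theorem~\ref{thmpirashvili} is presented as ``the cohomology version'' of Pirashvili's Corollary~1.3 and is simply cited, with no argument given. Your proposal therefore supplies more than the paper does, and the route you choose --- dualizing the coefficient module and transporting Pirashvili's homological vanishing result across the isomorphisms $H^n(G,M)\cong H_n(G,M^{\ast})^{\ast}$ and $HL^n(G,M)\cong HL_n(G,M^{\ast})^{\ast}$ --- is a standard and correct way to obtain the cohomological statement, given that $M$ is finite-dimensional over a field (which is the only situation the paper ever uses, since $M=I$ is a finite-dimensional module over a field of characteristic zero). The alternative you mention, redoing Pirashvili's comparison of the Chevalley--Eilenberg and Loday complexes directly on the cochain side, is in fact closer to how the result is usually stated in the literature, but the duality route is legitimate.

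Two points deserve explicit attention if this sketch is to become a proof. First, you should state the hypothesis that $M$ is finite-dimensional, since the double-dual identifications $H^n(G,M)\cong H_n(G,M^{\ast})^{\ast}$ and the recovery of $HL^{\ast}(G,M)=0$ from $HL_{\ast}(G,M^{\ast})=0$ both use it; the theorem as printed has no such hypothesis, though the paper only applies it to finite-dimensional $M$. Second, and more substantively, Leibniz cohomology $HL^{\ast}(G,M)$ requires $M$ to carry a Leibniz bimodule structure (the three axioms in the preliminaries), whereas the hypothesis only provides a right $G$-module; one must fix the induced left action (e.g.\ the symmetric choice $[x,m]:=-[m,x]$) and check that under dualization this corresponds to the corepresentation structure on $M^{\ast}$ for which Pirashvili's homological result is stated. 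This is exactly where your ``bookkeeping check'' lives, and it is the only place the argument could silently go wrong; it should be carried out rather than merely announced. With those two items supplied, the proof is complete and formal, as you say.
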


Later on we need the following description of the derivations of complex simple Leibniz algebras \cite{RMO}.

\begin{thm} \label{derivationsimple} Let $L=G\dot{+}I$ be a complex simple Leibniz algebra. Then any derivation $d$ of $L$
 can be represented as $d=R_a+\alpha+\triangle$, where $a\in G$, $\triangle \colon G\rightarrow I, \ \alpha = \lambda \id_{|_I}$ for some $\lambda \in \mathbb{C}$.
  In addition, if $\dim G\neq \dim I$, then $\triangle=0$. If $\dim G=\dim I$, then either $\triangle(G)=I$ or $\triangle(G)=0$.
\end{thm}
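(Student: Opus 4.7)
The plan is to decompose $d$ block-wise with respect to the splitting $L = G \dot{+} I$ and exploit the bracket constraints $[G, I] = 0$ and $[I, I] = 0$, which hold because the ideal $I$ coincides with the span of squares and therefore lies in the left annihilator of $L$. Write $d(g) = d_G(g) + \triangle(g)$ for $g \in G$ and $d(i) = \beta(i) + \delta(i)$ for $i \in I$, with $d_G \colon G \to G$, $\triangle \colon G \to I$, $\beta \colon I \to G$ and $\delta \colon I \to I$. The derivation identity applied to each of the brackets $[g_1, g_2]$, $[i_1, i_2]$ and $[i, g]$ splits, after projecting onto $G$ and $I$, into independent component equations which I would analyse in turn.

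From $d([g_1, g_2]) = [d(g_1), g_2] + [g_1, d(g_2)]$, the $G$-projection forces $d_G$ to be a Lie derivation of the simple algebra $G$, hence inner by Whitehead; writing $d_G = R_a|_G$ for a suitable $a \in G$ and replacing $d$ by $d - R_a$, one reduces to the case $d_G = 0$. The $I$-projection of the same identity gives the cocycle-type relation
\[
\triangle([g_1, g_2]) = [\triangle(g_1), g_2],
\]
since $[g_1, \triangle(g_2)] = 0$ by $[G, I] = 0$. Applying the derivation identity to $[i_1, i_2] = 0$ and discarding the summands killed by $[G,I] = [I,I] = 0$ leaves $[i_1, \beta(i_2)] = 0$ for all $i_1, i_2 \in I$. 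Since $I$ is a non-trivial irreducible module over the simple $G$ (a trivial action would promote $G$ itself to a proper ideal of $L$, contradicting simplicity), the action is faithful and hence $\beta = 0$, so $d(I) \subseteq I$.

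With $d_G = 0$ and $\beta = 0$ in hand, the identity on $[i, g]$ reduces to $\delta([i, g]) = [\delta(i), g]$, which exhibits $\delta$ as an endomorphism of the irreducible $G$-module $I$; Schur's lemma then forces $\delta = \lambda\,\id_I$ for some $\lambda \in \mathbb{C}$, producing the decomposition $d = R_a + \alpha + \triangle$ with $\alpha = \lambda\,\id_{|_I}$. Finally, the cocycle relation already obtained shows that $\triangle(G)$ is a $G$-submodule of $I$ and $\ker \triangle$ is an ideal of $G$: irreducibility of $I$ forces $\triangle(G) \in \{0, I\}$, and when $\triangle \neq 0$ simplicity of $G$ yields $\ker \triangle = 0$, so $\triangle$ is a bijection $G \xrightarrow{\sim} I$ and in particular $\dim G = \dim I$. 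The chief obstacle in this plan is the step $\beta = 0$: it hinges on the faithfulness of the $G$-action on $I$, which requires a brief separate justification from the simplicity of $L$; everything else is a clean chain of Whitehead's theorem plus Schur's lemma applied to the four-block bookkeeping.
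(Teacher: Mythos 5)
Your argument is correct. Note that the paper itself does not prove this theorem; it is imported from the reference [RMO] (Rakhimov--Masutova--Omirov), so there is no internal proof to compare against. Your route --- block decomposition of $d$ relative to $L=G\dot{+}I$, using $[L,I]=0$, Whitehead's first lemma to make the $G\to G$ block inner, faithfulness of the $G$-action on $I$ (kernel is an ideal of the simple $G$, and triviality would make $G$ an ideal of $L$) to kill the $I\to G$ block, Schur's lemma for the $I\to I$ block, and the intertwining relation $\triangle([g_1,g_2])=[\triangle(g_1),g_2]$ together with irreducibility of $I$ and simplicity of $G$ for the trichotomy on $\triangle$ --- is the natural one and is essentially the argument of [RMO]; the step you flag as the chief obstacle ($\beta=0$) is already justified adequately by your own remark.
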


\begin{cor} \label{dimbl2sl2} In the case of $\dim G \neq \dim I$, we have $\dim BL^2(L,L)=(\dim G+\dim I)^2-\dim G -1$.

\end{cor}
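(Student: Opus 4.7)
The plan is to exploit the identification $BL^2(L,L) = \operatorname{Im} d^1$ together with the rank-nullity theorem applied to $d^1 \colon \operatorname{Hom}(L,L) \to CL^2(L,L)$. Comparing the definition of $d^1$ in \eqref{eqB2} with the definition of a derivation, one sees immediately that $\ker d^1 = \operatorname{Der}(L)$. Consequently
\[
\dim BL^2(L,L) = \dim \operatorname{Hom}(L,L) - \dim \operatorname{Der}(L) = (\dim G + \dim I)^2 - \dim \operatorname{Der}(L),
\]
so the corollary reduces to showing that $\dim \operatorname{Der}(L) = \dim G + 1$ whenever $\dim G \neq \dim I$.

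For this I would invoke Theorem \ref{derivationsimple} directly: in the case $\dim G \neq \dim I$ the component $\triangle$ vanishes, and every derivation takes the form $d = R_a + \lambda \, \id_{|_I}$ with $a \in G$ and $\lambda \in \mathbb{C}$. This immediately gives $\dim \operatorname{Der}(L) \le \dim G + 1$. To get equality, I would verify that the assignment $(a,\lambda)\mapsto R_a + \lambda \, \id_{|_I}$ is injective. Suppose $R_a + \lambda \, \id_{|_I} = 0$. Restricting to $G$ yields $R_a|_G = 0$, i.e., $a$ lies in the center of the simple Lie algebra $G$, hence $a = 0$. Restricting then to $I$ gives $\lambda \, \id_I = 0$, so $\lambda = 0$.

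Substituting $\dim \operatorname{Der}(L) = \dim G + 1$ into the rank-nullity equation produces the claimed formula
\[
\dim BL^2(L,L) = (\dim G + \dim I)^2 - \dim G - 1.
\]
The main (and really only) subtle point is the injectivity argument, i.e., ruling out overlaps between the inner derivations $R_a$ coming from $G$ and the scalar derivation $\lambda \, \id_{|_I}$; this is handled cleanly using simplicity (triviality of the center) of the Lie algebra $G$. Everything else is a bookkeeping step built on Theorem \ref{derivationsimple}.
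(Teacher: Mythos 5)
Your proof is correct and is exactly the argument the paper leaves implicit: the corollary is stated without proof as a direct consequence of Theorem~\ref{derivationsimple}, via $\dim BL^2(L,L)=\dim\Hom(L,L)-\dim\Der(L)$ together with the count $\dim\Der(L)=\dim G+1$ in the case $\triangle=0$. The only point worth adding is that equality (rather than just the upper bound) for $\dim\Der(L)$ also requires that every map $R_a+\lambda\,\id_{|_I}$ actually is a derivation --- immediate here, since $R_a$ is inner and $\id_{|_I}$ is a derivation because $I$ is an abelian ideal with $[L,I]\subseteq I$ and $[I,L]\subseteq I$ --- after which your injectivity check via the triviality of the center of the simple Lie algebra $G$ completes the dimension count.
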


Theorem~\ref{derivationsimple} can be extended for semisimple Leibniz algebras which admit a decomposition into direct sum of simple Leibniz algebras,
 that is, $L=\bigoplus_{i=1}^s L_i$, where ideals $L_i$ are simple.
\begin{thm} [\cite{RMO}] \label{derivationsemisimple} Any complex derivation $d$ of semisimple Leibniz algebra $L=\bigoplus_{i=1}^s L_i$
  can be represented as a sum of derivations of $L_i$, i.e., $d=\sum\limits_{i=1}^s d_i$, where each $d_i$ has the form as in Theorem~$\ref{derivationsimple}$.
\end{thm}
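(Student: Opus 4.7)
The plan is to show that every derivation $d$ of $L = \bigoplus_{i=1}^s L_i$ preserves each summand $L_i$, which reduces the statement to Theorem~\ref{derivationsimple} applied to each restriction $d_i := d|_{L_i}$. Writing $L_i = G_i \dot{+} I_i$ and $I = \bigoplus_{i=1}^s I_i$, I would first observe that $d(I) \subseteq I$: for any $x \in L$, $d([x,x]) = [d(x),x] + [x,d(x)]$, and the identity $[a,b] + [b,a] = [a+b,a+b] - [a,a] - [b,b]$ shows that such symmetric sums lie in $I$. Hence $d$ descends to a derivation $\bar d$ of the semisimple Lie algebra $L/I \cong \bigoplus_i G_i$. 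Because derivations of a semisimple Lie algebra are inner (the degree-one analogue of Theorem~\ref{whitehead}), $\bar d$ has the form $\mathrm{ad}(\bar a)$ for some $\bar a = \sum_i \bar a_i$ with $\bar a_i \in G_i$, and therefore preserves each simple ideal $G_i$.

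The main step is to upgrade this invariance from the Lie quotient back to $L$ itself. For $g \in G_i$ the previous paragraph forces
\[d(g) = \alpha + \sum_{j \neq i} \beta_j, \quad \text{with } \alpha \in L_i \text{ and } \beta_j \in I_j,\]
and the task is to show that each $\beta_j$ vanishes. The argument rests on two structural facts: $[L_i, L_j] = 0$ for $i \neq j$ (because the $L_i$ are ideals of $L$ whose sum is direct) and $[L, I] = 0$ (which follows by setting $z = y$ in the Leibniz identity to obtain $[x, [y,y]] = 0$, combined with the fact that $I$ is spanned by squares). Applying $d$ to $[g, h] = 0$ for an arbitrary $h \in G_j$, the equation $[d(g),h] + [g, d(h)] = 0$ collapses: $[\alpha, h] = 0$ by the first fact, and $[g, d(h)] = 0$ because the Lie-part of $d(h)$ lies in $L_j$ while its $I$-part is annihilated on the left. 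What remains is $[\beta_j, h] = 0$ for every $h \in G_j$. Simplicity of $L_j$ forces $I_j$ to be either zero or a non-trivial irreducible $G_j$-module, so its space of $G_j$-invariants is zero and $\beta_j = 0$. A strictly parallel argument starting from $[y, h] = 0$ for $y \in I_i$ shows $d(I_i) \subseteq I_i$, so altogether $d(L_i) \subseteq L_i$.

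With the invariance in hand, each $d_i := d|_{L_i}$ is a derivation of the simple Leibniz algebra $L_i$, and Theorem~\ref{derivationsimple} gives the asserted form summand by summand. The main obstacle is the middle step: the cross components $\beta_j \in I_j$ are invisible in the Lie quotient $L/I$, and eliminating them requires a careful use of the identity $[L, I] = 0$ together with the irreducibility of each module $I_j$ over $G_j$ to conclude that the $G_j$-invariants vanish.
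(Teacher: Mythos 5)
Your argument is correct. Note first that the paper itself gives no proof of this statement: it is quoted from the reference [RMO] (Rakhimov--Masutova--Omirov), so there is no in-paper argument to compare against. Judged on its own, your proof is sound and self-contained: the reduction to showing $d(L_i)\subseteq L_i$ is the right strategy, and each ingredient checks out --- $d(I)\subseteq I$ via the polarization of squares, the descent to an inner derivation of the semisimple quotient $\bigoplus_i G_i$, the vanishing $[L_i,L_j]=0$ for $i\neq j$ and $[L,I]=0$ (from $[x,[y,y]]=0$ in a right Leibniz algebra together with $I$ being spanned by squares), and finally the elimination of the cross-components $\beta_j\in I_j$ from $[\beta_j,G_j]=0$. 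The only step you state tersely is why the $G_j$-invariants of $I_j$ vanish: a nonzero invariant vector would make $I_j$ the one-dimensional trivial module by irreducibility, and then $G_j$ would be a proper ideal of $L_j$ distinct from $\{0\}$, $I_j$, $L_j$, contradicting simplicity; it is worth spelling this out, but the conclusion is correct.
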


Recall, Schur's lemma which will be used later.
\begin{thm} \label{Schur} Let $G$ be a complex Lie algebra, $U$
and $V$ irreducible $G$-modules.
\begin{itemize}
\item[(i)] Any $G$-module homomorphism $\theta \colon U \rightarrow V$ is either  trivial or an isomorphism;
\item[(ii)] A linear map $\theta \colon V \rightarrow V$  is a $G$-module homomorphism if and only
if $\theta = \lambda \id_{|_V}$ for some $\lambda \in \mathbb{C}$.
\end{itemize}
\end{thm}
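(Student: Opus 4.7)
The plan is to prove the two parts of Schur's lemma by the classical approach from representation theory, which only uses the irreducibility of the modules together with the fact that $\mathbb{C}$ is algebraically closed.

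For part (i), I would first verify that $\Ker \theta \subseteq U$ is a $G$-submodule: if $u \in \Ker\theta$ and $g \in G$, then $\theta([u,g]) = [\theta(u),g] = 0$, so $[u,g] \in \Ker\theta$ (and similarly on the other side). Analogously, $\Ima \theta \subseteq V$ is a $G$-submodule, since for any $u \in U$ and $g \in G$ we have $[\theta(u),g] = \theta([u,g]) \in \Ima\theta$. Because $U$ is irreducible, $\Ker\theta$ is either $\{0\}$ or all of $U$; because $V$ is irreducible, $\Ima\theta$ is either $\{0\}$ or all of $V$. If $\theta \neq 0$, then $\Ker\theta \neq U$ forces $\Ker\theta = \{0\}$ (so $\theta$ is injective), and $\Ima\theta \neq \{0\}$ forces $\Ima\theta = V$ (so $\theta$ is surjective). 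Thus $\theta$ is an isomorphism.

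For part (ii), the implication $\theta = \lambda\id_{|_V} \Longrightarrow \theta$ is a $G$-module homomorphism is immediate from bilinearity of the action. For the converse, I would use that $V$ is a finite-dimensional complex vector space, so the linear map $\theta \colon V \to V$ admits an eigenvalue $\lambda \in \mathbb{C}$. The map $\theta - \lambda\id_{|_V}$ is a difference of $G$-module homomorphisms ($\lambda\id_{|_V}$ clearly commutes with the $G$-action), hence is itself a $G$-module homomorphism $V \to V$. Its kernel contains the $\lambda$-eigenspace of $\theta$, which is nonzero, so by part (i) the map $\theta - \lambda\id_{|_V}$ cannot be an isomorphism and must therefore be zero. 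We conclude $\theta = \lambda\id_{|_V}$.

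There is no serious obstacle here: the argument is entirely standard and relies only on (a) submodule closure under the action, which is a direct check from the definition of a module homomorphism, and (b) the existence of an eigenvalue over the algebraically closed base field $\mathbb{C}$. The only point that deserves care is making the submodule verifications on both sides when $V$ is treated as a $G$-module for a Lie algebra $G$, but since the module structure here is the ordinary Lie-algebraic one (not the two-sided Leibniz one of the earlier definition), both verifications are symmetric and routine.
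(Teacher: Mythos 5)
Your proof is correct and is the canonical argument for Schur's lemma; the paper itself gives no proof of this statement (it is merely recalled as a classical result to be used later), so there is nothing to diverge from. The one point worth making explicit is that part (ii) requires $V$ to be finite-dimensional (or at least that $\theta$ has an eigenvalue), which you do invoke and which holds throughout the paper's setting.
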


\section{Main Result}

\subsection{On Leibniz 2-cocycles for simple Leibniz algebras.}

\

\

Let $L$ be a $\mathbb{Z}$-graded Leibniz algebra, i.e.,  $L=\bigoplus\limits_{i\in \mathbb{Z}}L_i$ with $[L_i,L_j]\subseteq L_{i+j}$.

This gradation induces a gradation on $CL^n(L,L)$ as follows:
\[CL^n(L,L)=\bigoplus\limits_{i\in \mathbb{Z}}CL^n_{(i)},\] with
\[CL^n_{(i)} :=\{c\in CL^{n}(L,L) \mid
c(a_1,\dots,a_n)\in L_{i_1+i_2+\dots+i_n+i}, \ a_k\in L_{i_k}\}.\]

It is easy to see that the operator $d$ preserves the above gradation, that is,
\[d(CL^n_{(i)})\subseteq CL^{n+1}_{(i)}.\]

This gradation induces a $\mathbb{Z}$-gradation on
\[ZL^n(L,L)=\bigoplus\limits_{i\in \mathbb{Z}}ZL^n_{(i)},  \quad \text{and}  \quad  BL^n(L,L)=\bigoplus\limits_{i\in \mathbb{Z}}BL^n_{(i)}\]
 with $ZL^n_{(i)}=\{ c\in CL^n_{(i)} \mid d^n(c)=0\}$ and $BL^n_{(i)}=\{ d^{n-1}(c) \mid  c\in CL^{n-1}_{(i)}\}$.

According to the above $\mathbb{Z}$-gradation, we have
\[ZL^2(L,L)=\bigoplus\limits_{i\in \mathbb{Z}}ZL^2_{(i)}.\]

Let $L$ be a semisimple Leibniz algebra. Then $L=G\dot{+}I$ and by putting $L_0:=G, \ L_1:=I$ and $L_i =0$, for $i>1$ or $i < 0$, we obtain a $\mathbb{Z}$-gradation of $L$.

\begin{lem} Let $L=G\dot{+}I$ be a semisimple Leibniz algebra. Then
\[ZL^2(L,L)= ZL^2_{(-1)} \oplus ZL^2_{(0)} \oplus ZL^2_{(1)}.\]
\end{lem}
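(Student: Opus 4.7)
The plan is to combine the already-established decomposition $ZL^2(L,L)=\bigoplus_{i\in\mathbb{Z}}ZL^2_{(i)}$ with two further observations: a degree count bounding which summands can be nonzero, and a direct computation killing the one remaining ``extra'' summand $ZL^2_{(-2)}$.

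First I would observe that since $L_j=0$ for $j\notin\{0,1\}$, any $c\in CL^2_{(i)}$ evaluated on $a_k\in L_{i_k}$ lies in $L_{i_1+i_2+i}$; because $i_1+i_2\in\{0,1,2\}$, this forces $c=0$ unless $i\in\{-2,-1,0,1\}$. In particular $CL^2_{(i)}=0$, hence $ZL^2_{(i)}=0$, for $i$ outside this range.

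It then remains to show $ZL^2_{(-2)}=0$. A further degree check shows that any $c\in CL^2_{(-2)}$ vanishes on $G\times G$, on $G\times I$, and on $I\times G$, so $c$ is determined by its restriction $c\colon I\otimes I\to G$. The structural inputs I would use here are $[I,I]=0$ (abelianness of $I$) and $[L,I]=0$, the latter coming from the Leibniz identity $[x,[y,y]]=0$ combined with the fact recalled in the Preliminaries that $I$ is the linear span of squares; in particular $[G,I]\subseteq [L,I]=0$. I would then plug $x\in G$ and $y,z\in I$ into the cocycle identity~\eqref{eqZ2}: the two terms $[c(x,y),z]$ and $[c(x,z),y]$ drop out because $c$ vanishes on mixed arguments, and the three terms $c(x,[y,z])$, $c([x,y],z)$, $c([x,z],y)$ drop out because $[y,z]\in[I,I]=0$ and $[x,y],[x,z]\in[G,I]=0$. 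Only $[x,c(y,z)]=0$ remains; since $c(y,z)\in G$ and $G$ is a semisimple Lie algebra in characteristic zero, its center is trivial, and so $c(y,z)=0$ for all $y,z\in I$.

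There is no real obstacle here beyond careful bookkeeping: the main thing is to track which of the six terms of~\eqref{eqZ2} survive the degree restrictions, and then to invoke triviality of $Z(G)$ on the one surviving term. None of the heavier tools in the Preliminaries (Whitehead, Pirashvili, Schur) are needed at this stage; those will enter in the later analysis of the degrees $-1$, $0$, and $1$.
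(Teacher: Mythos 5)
Your proposal is correct and follows essentially the same route as the paper: a degree count eliminates $ZL^2_{(i)}$ for $i\notin\{-2,-1,0,1\}$, and then evaluating the cocycle identity on $G\times I\times I$ reduces the degree $-2$ part to $[G,\varphi_{-2}(I,I)]=0$ with $\varphi_{-2}(I,I)\subseteq G$, which vanishes by triviality of the center of the semisimple Lie algebra $G$. You merely spell out more explicitly (via $[L,I]=0$ and $[I,I]=0$) why the remaining five terms of \eqref{eqZ2} drop out, which the paper leaves implicit.
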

\begin{proof} It is easy to see $ZL^2_{(i)}=0$, for $i \leq -3$ and $i \geq 2$. Therefore,

\[ZL^2(L,L)= ZL^2_{(-2)} \oplus ZL^2_{(-1)} \oplus ZL^2_{(0)} \oplus ZL^2_{(1)}.\]

For $\varphi\in ZL^2(L,L)$ we have $\varphi=\varphi_{-2}+\varphi_{-1}+\varphi_0+\varphi_1$.

By definition we have $\varphi_{-2}(G,G)=\varphi_{-2}(G,I)=\varphi_{-2}(I,G)=0$ and $\varphi_{-2}(I,I)\subseteq G$.

From $(d^2\varphi_{-2})(G,I,I)=0$ we have $[G,\varphi_{-2}(I,I)]=0$. Taking into account the multiplication of  the semisimple Leibniz algebra $L$
 we conclude that $\varphi_{-2}(I,I)\subseteq I$. Thus, $\varphi_{-2}(I,I)=0$ and we get $\varphi_{-2}=0$.
\end{proof}

Let us introduce the following notations:

\begin{align*}
 \Phi_{G,I}^{-1} &=\{\varphi_{-1}\in ZL^2_{(-1)}  \mid  \varphi_{-1} \colon  G \otimes I \rightarrow
G\}, & \Phi_{G,G}^{0} & =\{\varphi_{0}\in ZL^2_{(0)}  \mid  \varphi_{0} \colon  G\otimes G \rightarrow G\},\\
 \Phi_{I,G}^{-1} &=\{\varphi_{-1}\in ZL^2_{(-1)} \mid  \varphi_{-1} \colon I \otimes G \rightarrow G\}, & \Phi_{G,I}^{0} &=\{\varphi_{0}\in ZL^2_{(0)} \mid
   \varphi_{0} \colon G\otimes I \rightarrow I\},\\
 \Phi_{I,I}^{-1} &=\{\varphi_{-1}\in ZL^2_{(-1)} \mid \varphi_{-1} \colon I\otimes I \rightarrow I\}, & \Phi_{I,G}^{0} &=\{\varphi_{0}\in ZL^2_{(0)} \mid
  \varphi_{0} \colon I\otimes G \rightarrow I\},\\
 \Phi_{G,G}^{1} &=\{\varphi_{1} \in ZL^2_{(1)} \mid  \varphi_{1} \colon G\otimes G \rightarrow I\}. &
\end{align*}

It is easy to see that
\[ZL^2(L,L)=\Phi_{G,I}^{-1}+\Phi_{I,G}^{-1}+\Phi_{I,I}^{-1}+\Phi_{G,I}^{0}+\Phi_{G,G}^{0}+\Phi_{I,G}^{0}+
\Phi_{G,G}^{1}.\]
Considering the equality \eqref{eqZ2} for homogeneous elements we deduce:

\begin{align}
&[x,\varphi(y,z)]=0, \quad x \in G, y, z \in I; \label{eq1} \\
&[x,\varphi(y,z)] +[\varphi(x,z), y] - \varphi([x,y],z)=0, \quad y \in G, x, z\in I; \label{eq2} \\
&[x,\varphi(y,z)] - [\varphi(x,y), z] + \varphi(x, [y,z]) + \varphi([x,z],y)=0, \quad z \in G, x, y\in I; \label{eq3}\\
&[x,\varphi(y,z)] + [\varphi(x,z), y] - \varphi([x,y],z)=0, \ x, y \in G, z\in I; \label{eq4}\\
& [x,\varphi(y,z)] - [\varphi(x,y), z]  + \varphi(x, [y,z]) + \varphi([x,z],y)=0, \quad x, z \in G, y \in I; \label{eq5}\\
&[x,\varphi(y,z)] - [\varphi(x,y), z] + [\varphi(x,z), y] + \varphi(x, [y,z]) - \varphi([x,y],z) + \varphi([x,z],y)=0, \ y, z \in G, x\in I; \label{eq6}\\
&[x,\varphi(y,z)] - [\varphi(x,y), z] + [\varphi(x,z), y] + \varphi(x, [y,z]) - \varphi([x,y],z) + \varphi([x,z],y)=0, \ x, y, z \in G. \label{eq7}
\end{align}

\begin{prop}  Let $L=G\dot{+}I$ be a semisimple Leibniz algebra. Then $\Phi_{G,I}^{0}=\{0\}$.
\end{prop}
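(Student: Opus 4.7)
The plan is to specialize the seven cocycle relations \eqref{eq1}--\eqref{eq7} to a $\varphi\in\Phi_{G,I}^{0}$ and show that almost all of them are vacuous, then combine the two surviving relations to force $\varphi=0$. The key structural facts I would use are the standard identities $[G,I]=0$ (a consequence of $[g,[x,x]]=[[g,x],x]-[[g,x],x]=0$ together with the fact that $I$ is spanned by squares) and $[I,I]=0$ (abelianness of $I$, already noted in the preliminaries). Combined with the support condition that $\varphi$ vanishes outside $G\otimes I$ and takes values in $I$, these reduce \eqref{eq1}, \eqref{eq2}, \eqref{eq3}, \eqref{eq6}, and \eqref{eq7} to trivialities, so only \eqref{eq4} and \eqref{eq5} carry information.

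The heart of the argument is to combine these two. After discarding zero terms, \eqref{eq4} collapses to
\[[\varphi(a,c),b]=\varphi([a,b],c), \qquad a,b\in G,\ c\in I,\]
and \eqref{eq5} collapses to
\[\varphi(x,[y,z])=[\varphi(x,y),z]-\varphi([x,z],y), \qquad x,z\in G,\ y\in I.\]
The key manoeuvre, and the main conceptual obstacle, is to specialize the reduced \eqref{eq4} at $(a,b,c)=(x,z,y)$, giving exactly $[\varphi(x,y),z]=\varphi([x,z],y)$. Substituting this into the reduced \eqref{eq5}, the two terms on the right-hand side cancel and we are left with $\varphi(x,[y,z])=0$ for all $x,z\in G$ and $y\in I$.

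To finish I would invoke the identity $[I,G]=I$, valid in every semisimple Leibniz algebra: since $I$ is spanned by squares $[u,u]$ and the decomposition $u=g+i$ together with $[g,g]=[g,i]=[i,i]=0$ collapses $[u,u]$ to $[i,g]\in[I,G]$, the inclusion $I\subseteq[I,G]$ holds (the reverse is automatic). Hence $\varphi(x,i)=0$ for every $x\in G$ and $i\in I$, i.e.\ $\varphi=0$, which is precisely $\Phi_{G,I}^{0}=\{0\}$. Once the cancellation between \eqref{eq4} and \eqref{eq5} is spotted, the remainder of the argument is routine; pairing these two cocycle relations in the correct way is the only nontrivial idea in the proof.
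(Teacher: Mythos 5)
Your proposal is correct and follows essentially the same route as the paper: both reduce \eqref{eq4} and \eqref{eq5} for $\varphi_0\in\Phi_{G,I}^{0}$ to the two relations $[\varphi_0(x,z),y]=\varphi_0([x,y],z)$ and $[\varphi_0(x,z),y]=\varphi_0(x,[z,y])+\varphi_0([x,y],z)$, cancel to get $\varphi_0(x,[z,y])=0$, and conclude via $[I,G]=I$. The only difference is that you spell out the vacuity of the remaining relations and the identity $[I,G]=I$, which the paper leaves implicit.
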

\begin{proof} From equalities \eqref{eq4}--\eqref{eq5} for $\varphi_0$, we have
\begin{align*}
[\varphi_0(x,z), y]&=\varphi_0([x,y],z), &&\\
[\varphi_0(x,z), y]&= \varphi_0(x, [z,y]) + \varphi_0([x,y],z), &&   \ \text{with} \  x,y \in G, \ z\in I.
\end{align*}

Therefore, $\varphi_0(x,[z,y])=0, \, x,y \in G, \, z\in I$. Taking into account $[I,G]=I$, we conclude $\varphi_0(G,
I)=0$.
\end{proof}

For a semisimple Leibniz algebra $L=G\dot{+}I$, we consider the subspace $BL_{(0)}^2(G,G)=\{f_0\in BL_{(0)}^2(L,L) \mid \varphi_0 \colon G\otimes G \rightarrow G \}$
 of the space $BL_{(0)}^2(L,L)$.

\begin{prop}  Let $L=G\dot{+}I$ be a semisimple Leibniz algebras. Then $\Phi_{G,G}^{0}=BL_{(0)}^2(G,G)$ and $\dim \Phi_{G,G}^{0}=(\dim G)^2 - \dim G$.
\end{prop}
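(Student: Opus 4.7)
The plan is to identify $\Phi_{G,G}^0$ and $BL^2_{(0)}(G,G)$ with the Leibniz 2-cocycle space $ZL^2(G,G)$ and 2-coboundary space $BL^2(G,G)$ of the semisimple Lie algebra $G$ (with values in the adjoint $G$-module $G$), and then invoke Whitehead's second lemma via Pirashvili's theorem. The starting observation is that for $\varphi_0\in\Phi_{G,G}^0$ all arguments and brackets in equation \eqref{eq7} lie entirely in $G$, so \eqref{eq7} reads
\[
[x,\varphi_0(y,z)] - [\varphi_0(x,y),z] + [\varphi_0(x,z),y] + \varphi_0(x,[y,z]) - \varphi_0([x,y],z) + \varphi_0([x,z],y)=0
\]
for all $x,y,z\in G$, which is exactly the Leibniz 2-cocycle condition for $G$ with values in $G$. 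Hence $\Phi_{G,G}^0\cong ZL^2(G,G)$, and analogously $BL^2_{(0)}(G,G)\cong BL^2(G,G)$.

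Next I would invoke semisimplicity. Whitehead's second lemma (Theorem~\ref{whitehead}) yields $H^1(G,G)=H^2(G,G)=0$, and Pirashvili's theorem (Theorem~\ref{thmpirashvili}) transfers this to Leibniz cohomology, giving $HL^1(G,G)=HL^2(G,G)=0$. In particular, $ZL^2(G,G)=BL^2(G,G)$, which under the identification above means $\Phi_{G,G}^0 = BL^2_{(0)}(G,G)$, establishing the first assertion.

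For the dimension, I would use the short exact sequence
\[
0 \longrightarrow ZL^1(G,G) \longrightarrow CL^1(G,G) \xrightarrow{\ d^1\ } BL^2(G,G) \longrightarrow 0.
\]
Clearly $\dim CL^1(G,G)=(\dim G)^2$. Since $HL^1(G,G)=0$, we have $ZL^1(G,G) = d^0(G)$, where $d^0(m)(x)=[x,m]$. The kernel of $d^0$ is the center $Z(G)$, which vanishes because $G$ is semisimple, so $d^0$ is injective and $\dim ZL^1(G,G)=\dim G$. Hence $\dim\Phi_{G,G}^0 = (\dim G)^2-\dim G$.

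There is no serious obstacle: once one has the reduction in the first paragraph, Pirashvili's theorem together with Whitehead's lemma does all the work. The one point that needs care is the decoupling step, i.e., verifying that the mixed-argument cocycle identities \eqref{eq1}--\eqref{eq6} impose no further restriction on the $G\otimes G\to G$ component beyond \eqref{eq7}, so that $\Phi_{G,G}^0$ is genuinely identified with $ZL^2(G,G)$.
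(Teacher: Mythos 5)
Your argument is correct in substance but takes a genuinely different route from the paper at the key cohomological step. The paper first sets $y=z$ in \eqref{eq7}, obtains $[x,\varphi_0(y,y)]=0$ for all $x\in G$ and hence $\varphi_0(y,y)=0$, deduces skew-symmetry by polarization, and only then reads \eqref{eq7} as the \emph{Lie} 2-cocycle identity, so that Whitehead's second lemma applies directly to give $Z^2(G,G)=B^2(G,G)$. You instead read \eqref{eq7} as the \emph{Leibniz} 2-cocycle identity for $G$ with adjoint coefficients and invoke Pirashvili's theorem to transfer Whitehead's lemma to $HL^2(G,G)=0$; this skips the skew-symmetrization entirely and is in fact the same combination (Theorems~\ref{whitehead} and \ref{thmpirashvili}) that the paper itself uses for $\Phi^1_{G,G}$ in Proposition~\ref{prop34}. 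Your dimension count via $0\to ZL^1(G,G)\to CL^1(G,G)\to BL^2(G,G)\to 0$, with $\dim ZL^1(G,G)=\dim G$ because $d^0$ is injective on the centerless algebra $G$, is a clean equivalent of the paper's count $(\dim G)^2-\dim\Der(G)$. What the paper's route buys is the explicit skew-symmetry of $\varphi_0$, which it exploits in the computations of Section~4; what your route buys is uniformity with the treatment of $\Phi^1_{G,G}$.

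The one point you flag but do not carry out --- that \eqref{eq1}--\eqref{eq6} impose nothing further on the $G\otimes G\to G$ component --- is not innocuous, and the paper does not address it either. For a map $\varphi_0$ supported on $G\otimes G$ with values in $G$, identity \eqref{eq6} (with $x\in I$, $y,z\in G$) collapses to $[x,\varphi_0(y,z)]=0$, since every other term evaluates $\varphi_0$ on an argument from $I$; so a cocycle literally supported on $G\otimes G$ must satisfy $[I,\varphi_0(G,G)]=0$, which for a faithful action of $G$ on $I$ (e.g.\ the $\mathfrak{sl}_2$ case of Section~4) would force $\varphi_0=0$. The statement survives under the reading that $\Phi^0_{G,G}$ is the $G\otimes G$-component of elements of $ZL^2_{(0)}$, in which case \eqref{eq6} couples that component to the $I\otimes G$-component rather than constraining it alone. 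Since the paper's proof silently makes the same identification, your proposal is at the same level of rigor as the paper's; to make it fully complete you would either adopt that reading explicitly or exhibit, for each coboundary $f$ of $G$, an element of $ZL^2_{(0)}$ whose $G\otimes G$-component is $f$.
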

\begin{proof} Let us consider equality \eqref{eq7} for a $2$-cocycle $\varphi_0\in \Phi_{G,G}^{0}$ with $x, y, z\in G$ and $y=z$.
 Then we derive $[x,\varphi_0(y,y)]=0$. Hence, $\varphi_0(y,y)\in G\cap I=\{0\}$.

The equality
\[ 0=\varphi_0(x+y,x+y)=\varphi_0(x,y)+\varphi_0(y,x) \]
implies $\varphi_0(x,y)=-\varphi_0(y,x)$.

Taking into account that $\varphi_0(x,y)$ is skew-symmetric and equality \eqref{eq7}, we conclude that $\varphi_0(x,y)$
is a Lie 2-cocycle. Therefore, $\Phi_{G,G}^{0}=Z^2(G,G)$.
Due to Theorem~\ref{whitehead} we have $H^1(G,G)=H^2(G,G)=0$, which implies \[\Phi_{G,G}^{0}=Z^2(G,G)=B^2(G,G)=BL_{(0)}^2(G,G).\]

Since by definition $B^2(G,G)=\{f \in \Hom(G \wedge G,G) \mid f(x,y) = [d(x),y] + [x,d(y)] -
d([x,y]) \ \text{for some map} \ d\in \Hom(G, G)\setminus \Der(G)\}$, we get
$\Phi_{G,G}^{0}=(\dim G)^2 - \dim G$.
\end{proof}

Let us consider the subspace $BL_{1}^2(G,I):=\{f_1 \colon  G\otimes G \rightarrow I\}$ of the space $BL_{(1)}^2(L,L)$.

In next theorem we will establish the equality $\Phi_{G,G}^1=BL_{1}^2(G,I)$ and will calculate the dimension of $\Phi_{G,G}^1$.
\begin{prop} \label{prop34}
 Let $L=G\dot{+}I$ be a complex simple Leibniz algebra. Then $\Phi_{G,G}^1=BL_{1}^2(G,I)$ and
 $\dim \Phi_{G,G}^1=\dim G \cdot \dim I$ or $\dim \Phi_{G,G}^1=\dim G \cdot \dim I -1$.
\end{prop}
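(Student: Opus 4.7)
The plan is to identify $\Phi_{G,G}^1$ with the space of Leibniz $2$-cocycles on the Lie algebra $G$ with coefficients in $I$ (viewed as a right $G$-module), invoke Pirashvili's theorem together with Whitehead's second lemma to conclude that every such cocycle is a Leibniz coboundary, and then compute the dimension of $BL_1^2(G,I)$ via Schur's lemma.

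First, I will exploit that in $L=G\dot{+}I$ the left action of $G$ on $I$ is trivial ($[G,I]=0$) and $I$ is abelian ($[I,I]=0$), while $[I,G]\subseteq I$. Consequently, for any $\varphi_1\in\Phi_{G,G}^1$ and $x,y,z\in G$, the image $\varphi_1(y,z)$ lies in $I$, so the first term in \eqref{eq7} satisfies $[x,\varphi_1(y,z)]=0$. What remains is precisely the Leibniz $2$-cocycle identity for a cochain on $G$ valued in $I$ regarded as a Leibniz $G$-module with trivial left action and the natural right action. By Theorem~\ref{whitehead} we have $H^2(G,I)=0$, and Theorem~\ref{thmpirashvili} then yields $HL^2(G,I)=0$. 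Hence there exists $d\in\Hom(G,I)$ with
\[\varphi_1(x,y)=[x,d(y)]+[d(x),y]-d([x,y])=[d(x),y]-d([x,y]).\]
Extending $d$ by zero on $I$ produces $\tilde d\in\Hom(L,L)$ of degree $1$, and a direct check using $[G,I]=[I,I]=0$ shows that the Leibniz coboundary of $\tilde d$ agrees with $\varphi_1$ on $G\otimes G$ and vanishes on the other homogeneous pieces. Thus $\varphi_1\in BL_1^2(G,I)$; since the reverse inclusion $BL_1^2(G,I)\subseteq \Phi_{G,G}^1$ is automatic from $d^2\circ d^1=0$, the equality $\Phi_{G,G}^1=BL_1^2(G,I)$ follows.

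For the dimension, I will view the induced map $d^1\colon \Hom(G,I)\to CL^2(G,I)$ as $d\mapsto\bigl((x,y)\mapsto [d(x),y]-d([x,y])\bigr)$, so that $\dim\Phi_{G,G}^1=\dim BL_1^2(G,I)=\dim G\cdot\dim I-\dim\Ker d^1$. The kernel of $d^1$ consists of linear maps $d\colon G\to I$ satisfying $d([x,y])=[d(x),y]$, which are exactly right $G$-module homomorphisms from the adjoint module $G$ to the irreducible module $I$. Because $L$ is simple, $G$ is a simple Lie algebra and $I$ is an irreducible $G$-module, so Schur's lemma (Theorem~\ref{Schur}) forces $\dim\Hom_G(G,I)\in\{0,1\}$, with value $1$ precisely when $G\cong I$ as $G$-modules (in particular $\dim G=\dim I$). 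This gives $\dim\Phi_{G,G}^1=\dim G\cdot\dim I$ or $\dim G\cdot\dim I-1$. The main delicacy is the bookkeeping that turns the cocycle equation \eqref{eq7} restricted to $G\otimes G\to I$ into the genuine Leibniz $2$-cocycle equation on $G$ with coefficients in the Leibniz $G$-module $I$, and identifies coboundaries in $CL^\ast(L,L)$ that land in $\Phi_{G,G}^1$ with coboundaries in $CL^\ast(G,I)$; once that passage is made, the vanishing content comes entirely from Whitehead and Pirashvili, and the dimension count is pure Schur.
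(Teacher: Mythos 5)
Your proof is correct and follows essentially the same route as the paper: Whitehead's lemma plus Pirashvili's theorem to conclude $HL^2(G,I)=0$, then rank--nullity for $d^1$ on degree-one cochains, with the kernel $\Hom_G(G,I)$ controlled by Schur's lemma, giving the dichotomy $\dim G\cdot\dim I$ versus $\dim G\cdot\dim I-1$. The only cosmetic difference is that you spell out the identification of $\Phi_{G,G}^1$ with $ZL^2(G,I)$ (using $[L,I]=0$) and compute the kernel directly, whereas the paper takes that identification for granted and phrases the kernel as the degree-one derivations $\triangle$ from Theorem~\ref{derivationsimple} before applying Schur in the same way.
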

\begin{proof} Let $G$ be a simple Lie algebra. Then applying Theorems~\ref{whitehead} and \ref{thmpirashvili} we conclude that $HL^2(G,I)=\{0\}$, that is,
 $\Phi_{G,G}^1 = BL^2_{1}(G,I)$.

From equation \eqref{eqB2} we conclude that \[\dim BL_{1}^2(G,I)=\dim \Mat_{\dim G,\dim I}(\mathbb{C}) - \dim \Der_{(1)}(L),\]
where $\Mat_{r,s}$ is the vector space of all $r \times s$ matrices.

Recall that $d_1\in \Der_{(1)}(L)$ in Theorem~\ref{derivationsimple} is denoted by $\triangle$.

In the case $\triangle=0$, we have $\dim BL_{1}^2(G,I)=\dim G \cdot \dim I$.

Now we consider the case $\triangle\neq 0$. Note that $G$ and $I$ are irreducible $G$-modules. Moreover, $\triangle([g_1,g_2])=[\triangle(g_1),g_2]$
 for any $g_1, g_2\in G$. Applying part (i) of Theorem~\ref{Schur}, we obtain that the operator $\triangle$ is an isomorphism. Taking an appropriate basis of $I$
  we can identify $G$ and $I$ as $G$-modules. Using the second part of Theorem~\ref{Schur} we conclude $\dim \Der_{(1)}(L)=1$. Hence, $\dim BL_{1}^2(G,I)=\dim G \cdot \dim I - 1$.

\end{proof}

Thanks to Theorem~\ref{derivationsemisimple} by arguments used above we can extend the result of Proposition~\ref{prop34} to a direct sum of simple Leibniz algebras.

\begin{cor} Let $L$ be a complex semisimple Leibniz algebra that admits a decomposition into a direct sum of simple ideals, that is,
 $L=\oplus_{i=1}^s L_i$, where $L_i=G_i\dot{+}I_i$ are simple Leibniz algebras and $G=\oplus_{i=1}^s G_i$ is a semisimple Lie algebra.
  Then $\dim \Phi_{G,G}^1=\dim G \cdot \dim I$ or $\dim \Phi_{G,G}^1(G,I)=\dim G \cdot \dim I -k$, where
$k$ is the number of non-zero components in  the decomposition $\triangle=\sum_{i=1}^k \triangle_i$.
\end{cor}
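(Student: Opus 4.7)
The plan is to replicate the argument of Proposition~\ref{prop34} at the level of the full semisimple algebra, using Theorem~\ref{derivationsemisimple} in place of Theorem~\ref{derivationsimple}. First I would verify that $HL^2(G,I)=\{0\}$ still holds: since $G=\bigoplus_{i=1}^s G_i$ is a semisimple Lie algebra and $I=\bigoplus_{i=1}^s I_i$ is a finite-dimensional right $G$-module (with each $G_j$ acting on $I_i$ trivially for $j\neq i$ and via the irreducible action for $j=i$), Whitehead's second lemma (Theorem~\ref{whitehead}) gives $H^2(G,I)=0$, and then Theorem~\ref{thmpirashvili} gives $HL^2(G,I)=0$. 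Consequently $\Phi_{G,G}^1=BL^2_1(G,I)$, exactly as in the simple case.

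The second step is the dimension count. Formula \eqref{eqB2} realises $BL^2_1(G,I)$ as the image of the coboundary map $\Hom(G,I)\to CL^2_{(1)}$, whose kernel is precisely the space $\Der_{(1)}(L)$ of degree-$1$ derivations. Hence
\[\dim BL^2_1(G,I)=\dim\Hom(G,I)-\dim\Der_{(1)}(L)=\dim G\cdot\dim I-\dim\Der_{(1)}(L),\]
so everything reduces to computing $\dim\Der_{(1)}(L)$.

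The third step is to exploit Theorem~\ref{derivationsemisimple}: every derivation of $L$ decomposes as $d=\sum_{i=1}^s d_i$ with each $d_i=R_{a_i}+\alpha_i+\triangle_i$ a derivation of $L_i$ in the form prescribed by Theorem~\ref{derivationsimple}. Extracting the degree-$1$ components yields $\triangle=\sum_{i=1}^s\triangle_i$ with $\triangle_i\colon G_i\to I_i$, and the direct-sum structure forces the various $\triangle_i$ to sit in independent summands. Applying Schur's lemma (Theorem~\ref{Schur}) componentwise exactly as in the proof of Proposition~\ref{prop34}, each non-zero $\triangle_i$ is a $G_i$-module isomorphism $G_i\xrightarrow{\sim}I_i$ and, once $G_i$ and $I_i$ are identified, is determined up to a scalar by Schur~(ii). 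Thus each non-zero $\triangle_i$ contributes a $1$-dimensional subspace to $\Der_{(1)}(L)$, so if $k$ is the number of non-zero $\triangle_i$ then $\dim\Der_{(1)}(L)=k$, giving $\dim\Phi_{G,G}^1=\dim G\cdot\dim I-k$ (which specialises to $\dim G\cdot\dim I$ when $k=0$).

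The only real obstacle I foresee is confirming that no derivation $\triangle$ of $L$ can have cross components from $G_i$ into $I_j$ for $i\neq j$; but this is already encoded in Theorem~\ref{derivationsemisimple}, because $G_j$ acts trivially on $I_i$ and a Schur-type argument forces any such cross map to vanish. With that in hand the counting is immediate, and the rest of the proof is a routine transcription of Proposition~\ref{prop34}.
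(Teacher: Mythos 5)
Your proof is correct and takes essentially the same approach the paper intends: the paper offers no explicit proof of this corollary beyond the remark that Theorem~\ref{derivationsemisimple} allows the argument of Proposition~\ref{prop34} to be repeated, and your proposal is exactly that componentwise transcription (Whitehead--Pirashvili to get $HL^2(G,I)=0$, then $\dim \Der_{(1)}(L)=k$ via Schur's lemma applied to each non-zero $\triangle_i$).
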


\begin{prop}  Let $L=G\dot{+}I$ be a semisimple Leibniz algebra. Then
\[\dim ZL^2_{(-1)}=\dim (\Phi_{G,I}^{-1}+\Phi_{I,G}^{-1}).\]
\end{prop}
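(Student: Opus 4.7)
The plan is to reduce the claim to a single vanishing statement. By the decomposition of $ZL^2(L,L)$ into its $\Phi$-components listed just after their definitions, the degree $-1$ part reads
\[
ZL^2_{(-1)} = \Phi_{G,I}^{-1} + \Phi_{I,G}^{-1} + \Phi_{I,I}^{-1},
\]
and since the three summands are supported on the pairwise disjoint bilinear pieces $G\otimes I$, $I\otimes G$ and $I\otimes I$ of $L\otimes L$, this sum is internally direct. Thus the stated equality of dimensions is equivalent to the single claim $\Phi_{I,I}^{-1}=\{0\}$, and the whole proof reduces to eliminating the $I\otimes I$-component of every degree $-1$ cocycle.

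To prove $\Phi_{I,I}^{-1}=\{0\}$, I would fix $\varphi_{-1}\in\Phi_{I,I}^{-1}$ and denote its only non-zero part by $\gamma=\varphi_{-1}|_{I\otimes I}\colon I\otimes I\to I$. Substituting $\alpha=\beta=0$ into the cocycle relations \eqref{eq2} and \eqref{eq3} reduces them to
\begin{align*}
[\gamma(x,z),y] &= \gamma([x,y],z), & x,z\in I,\ y\in G,\\
[\gamma(x,y),z] &= \gamma(x,[y,z]) + \gamma([x,z],y), & x,y\in I,\ z\in G.
\end{align*}
The first relation says that $\gamma$ is right-$G$-linear in its first slot. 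The key observation is that, relabelling the variables in this first relation so that its first slot reads $[x,z]$ with $x\in I$ and $z\in G$, it becomes $\gamma([x,z],y)=[\gamma(x,y),z]$. Substituting this expression into the right-hand side of the second relation causes the two copies of $[\gamma(x,y),z]$ to cancel, leaving $\gamma(x,[y,z])=0$ for all $x,y\in I$ and $z\in G$.

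To conclude, since the right $G$-module $I$ in the setting of this paper has no non-zero $G$-invariant vectors (indeed $I$ is irreducible and non-trivial in the simple Leibniz case), one has $[I,G]=I$, so $\{[y,z]\mid y\in I,\ z\in G\}$ spans $I$. Therefore $\gamma(x,\cdot)$ is identically zero on $I$ for every $x\in I$, which forces $\gamma=0$ and $\Phi_{I,I}^{-1}=\{0\}$. Combined with the direct-sum decomposition above, this yields $\dim ZL^2_{(-1)}=\dim(\Phi_{G,I}^{-1}+\Phi_{I,G}^{-1})$, as required.

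The main obstacle is the algebraic manoeuvre in the second paragraph: one must notice that the first cocycle relation, read in the correct variables, applies directly to the term $\gamma([x,z],y)$ of the second relation and produces the cancellation. Once that move is spotted, the proof is only a one-step cancellation followed by the now-standard use of $[I,G]=I$, which is the same ingredient invoked in the proof of the preceding proposition on $\Phi_{G,I}^{0}$.
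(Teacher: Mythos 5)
Your proof is correct and follows essentially the same route as the paper: the paper adds the specializations \eqref{eq2} and \eqref{eq3} (with $y\in G$, $x,z\in I$) to obtain $\varphi_{-1}(x,[z,y])=-[x,\varphi_{-1}(y,z)+\varphi_{-1}(z,y)]$ and then invokes $[I,G]=I$ to conclude that the $I\otimes I$ component of a degree $-1$ cocycle is determined by its $G\otimes I$ and $I\otimes G$ components. Your version---specializing to a cocycle supported only on $I\otimes I$ and cancelling to get $\gamma(x,[y,z])=0$---is exactly this identity read for a cocycle in $\Phi_{I,I}^{-1}$, so, granting the paper's own decomposition of $ZL^2_{(-1)}$ into the $\Phi$-subspaces, the two arguments coincide.
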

\begin{proof} Let us consider Equalities \eqref{eq2} and \eqref{eq3} for the 2-cocycle $\varphi_{-1}\in ZL^2_{(-1)}=\Phi_{I,I}^{-1}+ \Phi_{G,I}^{-1}+\Phi_{I,G}^{-1}$ with
$y \in G, \, x, z\in I$, then we get
\begin{align*}
[x,\varphi_{-1}(y,z)] +[\varphi_{-1}(x,z), y] - \varphi_{-1}([x,y],z)&=0,\\
[x,\varphi_{-1}(z,y)] - [\varphi_{-1}(x,z), y] + \varphi_{-1}(x, [z,y]) + \varphi_{-1}([x,y],z)&=0.
\end{align*}
Adding the above equalities we derive
\begin{equation} \label{eq1111}
\varphi_{-1}(x,
[z,y])=-[x,\varphi_{-1}(y,z)+\varphi_{-1}(z,y)], \ y\in G, \ x, z\in I.
\end{equation}

Thanks to equalities $[I,G]=I$ and \eqref{eq1111} we conclude that any $\varphi_{-1}'\in \Phi_{I,I}^{-1}$ is defined via
 some $\varphi_{-1}''\in \Phi_{G,I}^{-1}, \ \varphi_{-1}'''\in \Phi_{I,G}^{-1}$ and the multiplication $[I,G]$.
\end{proof}

\section{Application to simple Leibniz algebra with associated Lie algebra $\mathfrak{sl}_2$}

In this section we will consider  the simple Leibniz algebra $L= \mathfrak{sl}_2\dot{+}I$ of Theorem~\ref{simple}.

Taking into account that an arbitrary $\varphi_0\in \Phi_{G,G}^0$ is a Lie 2-cocycle, we introduce the following notations:
\[ \begin{array}{ll}
\varphi_0(e,f) = - \varphi_0(f,e) = a_{1}e+a_{2}f+a_{3}h, &\\[1mm]
\varphi_0(e,h) = - \varphi_0(h,e) = b_{1}e+b_{2}f+b_{3}h, &\\[1mm]
\varphi_0(f,h) = - \varphi_0(h,f) = c_{1}e+c_{2}f+c_{3}h, &\\[1mm]
\varphi_{0}(x_k,e) = \sum\limits_{i=0}^{m}b_{k,i}x_i, &   k=0,1,\dots,m, \\[1mm]
\varphi_{0}(x_k,f) = \sum\limits_{i=0}^{m}c_{k,i}x_i, &   k=0,1,\dots,m, \\[1mm]
\varphi_{0}(x_k,h) = \sum\limits_{i=0}^{m}d_{k,i}x_i, &   k=0,1,\dots,m. \\[1mm]
\end{array} \]

\begin{prop}  Let $L= \mathfrak{sl}_2\dot{+}I$ be the complex simple Leibniz algebra of Theorem~\ref{simple}.
Then $\dim\Phi_{I,G}^{0}=m^2+2m$.
\end{prop}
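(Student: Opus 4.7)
The plan is to identify $\Phi_{I,G}^{0}$ with the space of Chevalley--Eilenberg $1$-cocycles $Z^{1}(G,\operatorname{End}(I))$ for a suitable $G$-module structure on $\operatorname{End}(I)$, and then conclude by Whitehead's lemma together with Schur's lemma.

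First, I would specialise the cocycle identities \eqref{eq1}--\eqref{eq7} to $\varphi=\varphi_{0}\in\Phi_{I,G}^{0}$. By the bi-grading, $\varphi_{0}(I,G)\subseteq I$ while $\varphi_{0}$ vanishes on $G\otimes G$, $G\otimes I$ and $I\otimes I$. Combined with $[G,I]=0$ (the left action of $G$ on $I$ is trivial in our convention) and $[I,I]=0$ (since $I$ is abelian), each of \eqref{eq1}--\eqref{eq5} and \eqref{eq7} collapses to the trivial identity $0=0$. The only substantive constraint is \eqref{eq6} taken with $x\in I$, $y,z\in G$:
\[[\varphi_{0}(x,z),y]-[\varphi_{0}(x,y),z]+\varphi_{0}(x,[y,z])-\varphi_{0}([x,y],z)+\varphi_{0}([x,z],y)=0.\]

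Next, I would introduce $T_{g}\in\operatorname{End}(I)$ by $T_{g}(x):=\varphi_{0}(x,g)$ and recall that the right $G$-module structure on $I$ is $R_{g}(x)=[x,g]$. Substituting, the constraint rewrites as
\[T_{[y,z]}=[R_{z},T_{y}]-[R_{y},T_{z}].\]
Setting $\rho(g):=-R_{g}$ turns $I$ into a left $G$-module (since $g\mapsto R_{g}$ is an anti-homomorphism by the right Leibniz identity), and $\operatorname{End}(I)$ inherits the adjoint $G$-module structure $g\cdot A:=[\rho(g),A]$. The displayed equation is then precisely the $1$-cocycle condition for the map $\psi\colon G\to\operatorname{End}(I)$, $\psi(g):=T_{g}$. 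Conversely, any such $\psi$ manifestly yields an element of $\Phi_{I,G}^{0}$, the remaining cocycle identities being vacuous, so one obtains a vector-space isomorphism $\Phi_{I,G}^{0}\cong Z^{1}(G,\operatorname{End}(I))$.

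Finally, Theorem~\ref{whitehead} gives $H^{1}(G,\operatorname{End}(I))=0$, hence $Z^{1}(G,\operatorname{End}(I))=B^{1}(G,\operatorname{End}(I))$. A $1$-coboundary is the map $g\mapsto [\rho(g),A]$ for $A\in\operatorname{End}(I)$, and the kernel of the assignment $A\mapsto [\rho(\cdot),A]$ is the space of $G$-equivariant endomorphisms of $I$. Since $I$ is an irreducible $G$-module, Schur's lemma (Theorem~\ref{Schur}) makes this kernel one-dimensional. Therefore
\[\dim\Phi_{I,G}^{0}=\dim B^{1}(G,\operatorname{End}(I))=\dim\operatorname{End}(I)-1=(m+1)^{2}-1=m^{2}+2m.\]
The main obstacle is the bookkeeping in step one (verifying equation by equation that only \eqref{eq6} contributes, exploiting both the bi-grading and $[L,I]=0$) together with the sign conventions when recasting \eqref{eq6} as a standard Lie $1$-cocycle identity---this is delicate because $g\mapsto R_{g}$ is an anti-representation rather than a representation. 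A more computational alternative, aligned with the coefficients $b_{k,i},c_{k,i},d_{k,i}$ just introduced, would be to expand the constraint in the basis $\{e,f,h,x_{0},\dots,x_{m}\}$ and count solutions of the resulting linear system directly.
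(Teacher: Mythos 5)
Your proposal is correct, and it reaches the right count $\dim\operatorname{End}(I)-1=(m+1)^2-1=m^2+2m$, but by a genuinely different route from the paper. The paper works entirely in the explicit basis of Theorem~\ref{simple}: it expands $\varphi_0(x_k,e),\varphi_0(x_k,f),\varphi_0(x_k,h)$ in the $3(m+1)^2$ coefficients $b_{k,i},c_{k,i},d_{k,i}$, writes out the cocycle equations $(d^2\varphi_0)(x_k,g_1,g_2)=0$ as the linear system \eqref{eq11}--\eqref{eq33}, and then counts dependent parameters one family at a time (the relations \eqref{eq44}, \eqref{eq55}, \eqref{eq111}, \eqref{eq88}). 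You instead observe that for a degree-zero cocycle supported on $I\otimes G$ only identity \eqref{eq6} survives (I checked your reduction: with $[L,I]=0$ and $\varphi_0$ vanishing off $I\otimes G$, equations \eqref{eq1}--\eqref{eq5} and \eqref{eq7} are indeed vacuous), and that setting $T_g(x)=\varphi_0(x,g)$ turns \eqref{eq6} into $T_{[y,z]}=[R_z,T_y]-[R_y,T_z]$, which is the Chevalley--Eilenberg $1$-cocycle condition for $\operatorname{End}(I)$ with the adjoint action induced by $\rho=-R$; the sign bookkeeping, which you rightly flag as the delicate point, does work out since $R$ is an anti-representation. Whitehead's first lemma then gives $Z^1=B^1$, and Schur's lemma identifies the kernel of $A\mapsto[\rho(\cdot),A]$ with $\mathbb{C}\,\id_I$. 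What your argument buys is generality and brevity: it shows $\dim\Phi^0_{I,G}=(\dim I)^2-1$ for any complex simple Leibniz algebra $G\dot{+}I$ (with $I$ irreducible), not just for $\mathfrak{sl}_2$, which is directly relevant to the conjecture at the end of the paper; what the paper's computation buys is the explicit form of the cocycles in coordinates. The only caution is notational: the paper defines $\Phi^0_{I,G}$ as the set of full degree-zero cocycles supported on $I\otimes G$, and its proof carries along the inhomogeneous terms $t_1,t_2,t_3$ coming from the $\Phi^0_{G,G}$-component; your count of the associated homogeneous system agrees with the paper's, so no discrepancy arises.
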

\begin{proof} Equality $(d^2\varphi_{0})(e, f, h)=0$ implies
\begin{equation}\label{eq00} a_{1}=c_{1}, \quad a_{2}=-b_{3}, \quad b_{1}=-c_{2}.\end{equation}

For $0\leq k\leq m$, we consider  the equalities:
\[(d^2\varphi_{0})(x_k, e,
f)=(d^2\varphi_{0})(x_k, e, h)=(d^2\varphi_{0})(x_k, f, h)=0.\]

Then together with restrictions \eqref{eq00} for $0\leq k,i\leq m$, we derive
\begin{align}
t_1(i)-b_{k,i-1}-(i+1)(m-i)c_{k,i+1}+d_{k,i}+k(m+1-k)c_{k-1,i}+b_{k+1,i} & =0, \label{eq11} \\
t_2(i)+2(i-k+1)b_{k,i}-(i+1)(m-i)d_{k,i+1}+k(m+1-k)d_{k-1,i} &=0,  \label{eq22} \\
t_3(i)+2(i-k-1)c_{k,i}+d_{k,i-1}-d_{k+1,i} & =0, \label{eq33}
\end{align}
where
\[t_1(i)=\left\{\begin{array}{ll}
-k(m+1-k)a_{1},& i=k-1,\\[1mm]
(m-2k)a_{3},& i=k,\\[1mm]
a_{2},& i=k+1,\\[1mm]
0,& i\neq k-1,k,k+1,\\[1mm]
\end{array}\right.\]
\[ t_2(i)=\left\{\begin{array}{ll}
-k(m+1-k)b_{1},& i=k-1,\\[1mm]
-(m-2k)a_{2},& i=k,\\[1mm]
b_{2},& i=k+1,\\[1mm]
0,& i\neq k-1,k,k+1,\\[1mm]
\end{array}\right.\]
\[ t_3(i)=\left\{\begin{array}{ll}
-k(m+1-k)a_{1},& i=k-1,\\[1mm]
(m-2k)c_{3},& i=k,\\[1mm]
-b_{1},& i=k+1,\\[1mm]
0,& i\neq k-1,k,k+1,\\[1mm]
\end{array}\right.\]
and
$b_{k,i}=c_{k,i}=d_{k,i}=t_1(i)=t_2(i)=t_3(i)=0$,  for $k,i\leq -1$ or $k,i\geq m+1$.

If $i\neq k-1$ or $i\neq k+1$, then from restrictions \eqref{eq22} and \eqref{eq33}, we find

\begin{align}
b_{k,i} & =-\frac{t_2(i)}{2(i-k+1)}+\frac{(i+1)(m-i)}{2(i-k+1)}d_{k,i+1}-\frac{k(m+1-k)}{2(i-k+1)}d_{k-1,i}, \label{eq44} \\
c_{k,i} & =\frac{t_3(i)}{2(i-k-1)}-\frac{1}{2(i-k-1)}d_{k,i-1}+\frac{1}{2(i-k-1)}d_{k+1,i}. \label{eq55}
\end{align}

If $i=k-1$ or $i=k+1$, then
%we could not define parameters $b_{k,k-1}, \ c_{k,k+1}$ and %
from restrictions \eqref{eq22} and \eqref{eq33}, we derive

\begin{align*}
b_{1}+d_{k,k}-d_{k-1,k-1} & =0, && 1\leq k\leq m,\\
-b_{1}+d_{k,k}-d_{k+1,k+1} & =0, && 0\leq k\leq m-1.
\end{align*}

From the latter one, we obtain
\begin{equation} \label{eq66}
d_{k,k}=-kb_{1}+d_{0,0}, \ \text{for} \ 1\leq k\leq m.
\end{equation}

Consider  the restriction \eqref{eq11} for $i=k$,
\begin{equation} \label{eq111}
(m-2k)a_{1}-b_{k,k-1}-(k+1)(m-k)c_{k,k+1}+d_{k,k}+k(m+1-k)c_{k-1,k}+b_{k+1,k}=0.
\end{equation}

Summarizing equalities \eqref{eq111} for $k=0, 1, \dots, m$, we obtain
\begin{equation} \label{eq77}
d_{0,0}+d_{1,1}+\cdots+d_{m,m}=0.
\end{equation}

From \eqref{eq66} and \eqref{eq77} we deduce
\[d_{0,0}=\frac{m}{2}b_{1}.\]

Therefore, equality \eqref{eq66} can be rewritten in the following form:

\begin{equation} \label{eq88}
d_{k,k}=\frac{m-2k}{2}b_{1}, \ \ 0\leq k\leq m.
\end{equation}

Let us now calculate the number of independent parameters which define an arbitrary element of the space $\Phi_{I,G}^{0}$.

So, the number of total parameters $b_{k,i}, \, c_{k,i}, \, d_{k,i}$, with $0\leq k,i\leq m$, is equal to $3(m+1)^2$.

From restrictions \eqref{eq44} and \eqref{eq55} we find $b_{k,i},
c_{k,i}$ with $0\leq k,i\leq m$, except  $b_{k,k-1}$ and
$c_{k,k+1}$. Hence, the number of non independent parameters is
equal to $2(m+1)^2-2m$.

From equality \eqref{eq111} we conclude that the parameters $b_{k+1,k}, \ 0\leq k\leq m-1$, are not independent.
 Therefore, we have $m$ more a number of non independent parameters.

Finally, from \eqref{eq88} we get more $(m+1)$-pieces of non independent parameters.

Thus,
\[\dim \Phi_{I,G}^{0}=3(m+1)^2-\left[2(m+1)^2-2m+m+m+1\right]=m^2+2m.\]
\end{proof}

As a consequence of Proposition~\ref{prop34} we have the following proposition.

\begin{prop}  Let $L= \mathfrak{sl}_2\dot{+}I$ be a complex simple Leibniz algebra. Then $\dim\Phi_{G,G}^{1}=3(m+1)$ for $m\neq2$ and $\dim\Phi_{G,G}^{1}=3(m+1)-1$ for $m=2$.
\end{prop}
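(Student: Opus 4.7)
The plan is to apply Proposition~\ref{prop34} directly to the simple Leibniz algebra $L=\mathfrak{sl}_2\dot{+}I$ of Theorem~\ref{simple}. Since $G=\mathfrak{sl}_2$ has $\dim G=3$ and the ideal $I$ has basis $\{x_0,x_1,\dots,x_m\}$, we have $\dim I=m+1$, hence $\dim G\cdot \dim I=3(m+1)$. Proposition~\ref{prop34} then pins $\dim\Phi_{G,G}^1$ down to one of the two values $3(m+1)$ or $3(m+1)-1$, so the remaining task is to decide, as a function of $m$, which case actually occurs.

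Inspecting the proof of Proposition~\ref{prop34}, the deduction of the value $\dim G\cdot\dim I-1$ requires the existence of a non-zero component $\triangle\in\Der_{(1)}(L)$, i.e., a non-zero $G$-module homomorphism $G\to I$. By Theorem~\ref{derivationsimple}, such a $\triangle$ can be non-zero only when $\dim G=\dim I$, that is, only when $m+1=3$, equivalently $m=2$. Consequently, for every $m\neq 2$ one has $\triangle=0$ and therefore $\dim\Phi_{G,G}^1=3(m+1)$ by the first clause of Proposition~\ref{prop34}.

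For $m=2$ I still have to check that the non-zero option is genuinely realized. Reading off the action of $h$ from the multiplication table in Theorem~\ref{simple}, the weights on $I$ are $m,m-2,\dots,-m$, so $I$ is the $(m+1)$-dimensional irreducible $\mathfrak{sl}_2$-module. When $m=2$ this is the $3$-dimensional irreducible, hence isomorphic to the adjoint representation $G$ as a $G$-module. Schur's lemma (Theorem~\ref{Schur}(i)) then supplies an isomorphism $\triangle\colon G\to I$ of $G$-modules, which is precisely the non-zero derivation needed; the argument in Proposition~\ref{prop34} yields $\dim\Phi_{G,G}^1=3(m+1)-1$ in this case.

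There is no real obstacle here: the result is a bookkeeping consequence of Proposition~\ref{prop34} together with the observation that $\dim G=\dim I$ holds iff $m=2$ and that, in that one case, $I$ is actually isomorphic to the adjoint $\mathfrak{sl}_2$-module (as opposed to merely having the same dimension), which is what Schur's lemma needs to produce the non-zero $\triangle$.
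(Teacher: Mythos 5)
Your proof is correct and follows essentially the same route as the paper, which states this proposition as an immediate consequence of Proposition~\ref{prop34}: you simply make explicit that $\dim G=\dim I$ precisely when $m=2$, so that Theorem~\ref{derivationsimple} forces $\triangle=0$ (hence $\dim\Phi_{G,G}^1=3(m+1)$) for $m\neq 2$. The one point the paper leaves implicit --- that for $m=2$ a non-zero $\triangle$ genuinely exists because $I$ is then the three-dimensional irreducible $\mathfrak{sl}_2$-module and hence isomorphic to the adjoint module, not merely of the same dimension --- is exactly the detail your appeal to Schur's lemma supplies, and it is handled correctly.
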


We put
\[
\begin{array}{ll}
\varphi_{-1}(e,x_k) = \alpha_{k,m+1}e+\alpha_{k,m+2}f+\alpha_{k,m+3}h, &   k=0,1,\dots,m, \\[1mm]
\varphi_{-1}(f,x_k) = \beta_{k,m+1}e+\beta_{k,m+2}f+\beta_{k,m+3}h, &   k=0,1,\dots,m, \\[1mm]
\varphi_{-1}(h,x_k) = \gamma_{k,m+1}e+\gamma_{k,m+2}f+\gamma_{k,m+3}h, &   k=0,1,\dots,m, \\[1mm]
\varphi_{-1}(x_k,e) = b_{k,m+1}e+b_{k,m+2}f+b_{k,m+3}h, &   k=0,1,\dots,m, \\[1mm]
\varphi_{-1}(x_k,f) = c_{k,m+1}e+c_{k,m+2}f+c_{k,m+3}h, &   k=0,1,\dots,m, \\[1mm]
\varphi_{-1}(x_k,h) = d_{k,m+1}e+d_{k,m+2}f+d_{k,m+3}h, &   k=0,1,\dots,m. \\[1mm]
\end{array} \]

\begin{prop} $\dim\Phi_{G,I}^{-1}=3(m+1)$.
\end{prop}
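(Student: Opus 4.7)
The plan is to combine an upper bound on $\dim\Phi_{G,I}^{-1}$ via the cocycle identity with a matching lower bound via explicit coboundaries. The key observation is that \eqref{eq4}, the specialization of the Leibniz 2-cocycle identity to triples $(x,y,z)\in G\times G\times I$, involves only the $\varphi_{G,I}$-component of a homogeneous cocycle of degree $-1$; it therefore produces constraints on the $9(m+1)$ parameters $\alpha_{k,j},\beta_{k,j},\gamma_{k,j}$ alone. By contrast, equations \eqref{eq2} and \eqref{eq5} also involve the $\varphi_{I,I}$ and $\varphi_{I,G}$ components, which can be tuned freely, so they impose no further restriction on the projection onto $\Phi_{G,I}^{-1}$.

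For the upper bound, the plan is to instantiate \eqref{eq4} at the three triples $(e,f,x_k)$, $(e,h,x_k)$, $(f,h,x_k)$ for each $k\in\{0,\dots,m\}$. Using the $\mathfrak{sl}_2$-brackets $[e,f]=h$, $[e,h]=2e$, $[f,h]=-2f$ and comparing coefficients of $e,f,h$, the three identities collapse to six independent linear relations per $k$:
\[
\gamma_{k,m+1}=2\beta_{k,m+3},\quad \gamma_{k,m+2}=2\alpha_{k,m+3},\quad \gamma_{k,m+3}=0,
\]
\[
\alpha_{k,m+2}=0,\quad \beta_{k,m+1}=0,\quad \beta_{k,m+2}=-\alpha_{k,m+1}.
\]
This leaves $\alpha_{k,m+1}$, $\alpha_{k,m+3}$, $\beta_{k,m+3}$ as the only possibly free parameters for each $k$, so $\dim\Phi_{G,I}^{-1}\le 3(m+1)$.

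For the matching lower bound, the plan is to invoke the fact that coboundaries automatically belong to $ZL^2_{(-1)}$ and to compute the $G\otimes I\to G$ component of $d^1 g$ for an arbitrary linear map $g\colon I\to G$, extended by zero on $G$. Since $[G,I]=0$, the coboundary formula \eqref{eqB2} collapses to $d^1 g(x,v)=[x,g(v)]$ for $x\in G$, $v\in I$. Writing $g(x_k)=a_k e+b_k f+c_k h$ and evaluating at $x=e,f,h$ produces $\alpha_{k,m+1}=2c_k$, $\alpha_{k,m+3}=b_k$, $\beta_{k,m+3}=-a_k$, with the remaining coefficients automatically compatible with the six relations above. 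Hence the assignment $(a_k,b_k,c_k)\mapsto(\alpha_{k,m+1},\alpha_{k,m+3},\beta_{k,m+3})$ is a linear bijection, supplying $3(m+1)$ linearly independent cocycle projections in $\Phi_{G,I}^{-1}$, which yields $\dim\Phi_{G,I}^{-1}\ge 3(m+1)$. The two inequalities combine to give equality.

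The hard part will be the bookkeeping in the upper bound step: verifying that the three triples really produce exactly six linearly independent relations among the nine parameters per $k$. Several coefficient equations duplicate across triples; for instance $\gamma_{k,m+2}=2\alpha_{k,m+3}$ is obtained from both $(e,f,x_k)$ and $(e,h,x_k)$, and $\gamma_{k,m+3}=0$ emerges from both $(e,h,x_k)$ and $(f,h,x_k)$. Moreover, the relation $\beta_{k,m+2}=-\alpha_{k,m+1}$ arises only after substituting $\gamma_{k,m+3}=0$ into the $h$-coefficient equation from $(e,f,x_k)$. A careful tabulation is required to confirm the count of exactly six independent relations per $k$.
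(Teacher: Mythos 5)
Your proof is correct, and its first half is exactly the paper's argument: the paper's proof consists of writing out $(d^2\varphi_{-1})(e,f,x_i)=(d^2\varphi_{-1})(e,h,x_i)=(d^2\varphi_{-1})(f,h,x_i)=0$ and extracting the same six relations per index ($\alpha_{i,m+2}=\beta_{i,m+1}=\gamma_{i,m+3}=0$, $\alpha_{i,m+1}=-\beta_{i,m+2}$, $\alpha_{i,m+3}=\tfrac12\gamma_{i,m+2}$, $\beta_{i,m+3}=\tfrac12\gamma_{i,m+1}$), which match yours, giving the bound $\dim\Phi_{G,I}^{-1}\le 3(m+1)$. Where you genuinely differ is the lower bound. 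The paper simply asserts that ``only the above equalities give restrictions,'' i.e.\ it takes for granted that every solution of these six relations is realized by an actual degree $-1$ cocycle, even though equations \eqref{eq2} and \eqref{eq5} couple the $G\otimes I$ component to the $I\otimes G$ and $I\otimes I$ components and could in principle cut it down further. Your coboundary construction $d^1g$ for $g\colon I\to G$ (extended by zero on $G$), using $[G,I]=0$ so that $(d^1g)(x,v)=[x,g(v)]$, exhibits an explicit $3(m+1)$-dimensional family of genuine cocycles whose $G\otimes I$ components realize all three free parameters per $k$; this closes the gap the paper leaves implicit, and it is consistent with the subsequent propositions showing $\dim ZL^2_{(-1)}=\dim\Phi_{G,I}^{-1}$ and, ultimately, $ZL^2=BL^2$. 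So your argument is the paper's computation plus a justification the paper omits.
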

\begin{proof}
The equalities for $0 \leq i \leq m$
\[(d^2\varphi_{-1})(e, f, x_i)=(d^2\varphi_{-1})(e,
h, x_i)=(d^2\varphi_{-1})(f, h, x_i)=0\]
imply
\[\alpha_{i,m+2}=\beta_{i, m+1}=\gamma_{i, m+3}=0, \quad \alpha_{i, m+1}=-\beta_{i, m+2}, \quad \alpha_{i, m+3}=\frac{1}{2}\gamma_{i, m+2}, \quad  \beta_{i, m+3}=\frac{1}{2}\gamma_{i, m+1}.\]

Since only above equalities give us restrictions for parameters which are defined $\varphi_{-1}\in \Phi_{G,I}^{-1}$,
 we conclude that the number of independent parameters is equal to $3(m+1)$.
\end{proof}

\begin{prop} $\dim ZL^2_{(-1)}=\dim \Phi_{G,I}^{-1}$.
\end{prop}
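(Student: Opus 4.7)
The plan is to combine the preceding proposition, which gives $\dim ZL^2_{(-1)} = \dim(\Phi_{G,I}^{-1} + \Phi_{I,G}^{-1})$ for any semisimple Leibniz algebra, with the vanishing of $\Phi_{I,G}^{-1}$ in the present setting $G = \mathfrak{sl}_2$. Since elements of $\Phi_{G,I}^{-1}$ and $\Phi_{I,G}^{-1}$ are supported on disjoint summands ($G \otimes I$ and $I \otimes G$, respectively), establishing $\Phi_{I,G}^{-1} = \{0\}$ immediately yields the claimed equality of dimensions.

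To show $\Phi_{I,G}^{-1} = \{0\}$, I would pick an arbitrary $\varphi \in \Phi_{I,G}^{-1}$, so that $\varphi$ vanishes on $G \otimes I$, on $I \otimes I$ and on $G \otimes G$, taking values in $G$ only on $I \otimes G$. Then I would apply the cocycle identity \eqref{eq5} with $x, z \in G$ and $y \in I$. The three terms $[\varphi(x,y), z]$, $\varphi(x, [y,z])$ and $\varphi([x,z], y)$ each evaluate $\varphi$ on a pair from $G \otimes I$ and therefore vanish. What remains is
\[
[x, \varphi(y, z)] = 0 \qquad \text{for all } x \in G,\ y \in I,\ z \in G.
\]
Since $\varphi(y, z) \in G = \mathfrak{sl}_2$ and $\mathfrak{sl}_2$ has trivial center, this forces $\varphi(y, z) = 0$ for every $y \in I$ and $z \in G$; hence $\varphi \equiv 0$.

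There is no real obstacle here: once the three $G \otimes I$-type summands in \eqref{eq5} are annihilated by the support hypothesis, centerlessness of $\mathfrak{sl}_2$ does the rest. As a sanity check, the same conclusion can be drawn from \eqref{eq3}, which instead yields $[x, \varphi(y, z)] = 0$ for all $x \in I$; this also forces $\varphi(y,z) = 0$, since $I$ is a nontrivial irreducible $G$-module by Theorem~\ref{simple}, whence its left annihilator in $G$ is zero.
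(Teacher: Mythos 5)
Your core computation is correct, and it reaches the conclusion by a genuinely lighter route than the paper. The paper does not argue abstractly: it adds \eqref{eq4} and \eqref{eq5} to obtain $[x,\varphi_{-1}(z,y)]=-[x,\varphi_{-1}(y,z)]-\varphi_{-1}(x,[z,y])$ for $x,y\in G$, $z\in I$, and then evaluates this on every triple $(g_1,x_i,g_2)$ with $g_1,g_2\in\{e,f,h\}$, writing each coefficient $b_{i,m+j},c_{i,m+j},d_{i,m+j}$ of the $I\otimes G$ part explicitly as a linear combination of the parameters $\beta_{i,m+2},\gamma_{i,m+1},\gamma_{i,m+2}$ of the $G\otimes I$ part. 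You replace that coefficient bookkeeping by the observation that if the $G\otimes I$ values vanish, then \eqref{eq5} collapses to $[x,\varphi(y,z)]=0$ for all $x\in G$, and $Z(\mathfrak{sl}_2)=0$ finishes. This is shorter and, unlike the explicit computation, works verbatim for any semisimple Leibniz algebra, since a semisimple $G$ is always centerless.

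One caveat about the packaging. The slogan $\Phi_{I,G}^{-1}=\{0\}$ is only meaningful if $\Phi_{I,G}^{-1}$ denotes cocycles supported solely on $I\otimes G$. In the way the paper actually uses these symbols when counting dimensions (for instance, $\dim\Phi_{G,I}^{-1}=3(m+1)$ is the dimension of the space of admissible $G\otimes I$ components, not of single-support cocycles), $\Phi_{I,G}^{-1}$ is \emph{not} zero: the paper's own relations, e.g.\ $d_{i,m+1}=\frac{m-2i-2}{2}\gamma_{i,m+1}$, show that a generic element of $ZL^2_{(-1)}$ has nonzero $I\otimes G$ component. The statement your argument actually establishes, and the one you should record, is that a $(-1)$-cocycle vanishing on $G\otimes I$ also vanishes on $I\otimes G$ (your \eqref{eq5} step) and hence, by the preceding proposition, on $I\otimes I$ as well; that is, restriction to $G\otimes I$ is injective on $ZL^2_{(-1)}$, so $\dim ZL^2_{(-1)}=\dim\Phi_{G,I}^{-1}$. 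With that rephrasing your proof is complete and unambiguous.
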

\begin{proof} Let us consider equalities \eqref{eq4} and \eqref{eq5} for the 2-cocycle $\varphi_{-1}\in ZL^2_{(-1)}$ with
$x, y \in G, \, z\in I$. Then we get
\begin{align*}
[x,\varphi_{-1}(y,z)] + [\varphi_{-1}(x,z), y] - \varphi_{-1}([x,y],z)&=0,\\
[x,\varphi_{-1}(z,y)] - [\varphi_{-1}(x,z), y]  + \varphi_{-1}(x, [z,y]) + \varphi_{-1}([x,y],z)&=0.
\end{align*}

Summing these two equalities we derive
\begin{equation} \label{eq222}
[x,\varphi_{-1}(z,y)]=-[x, \varphi_{-1}(y,z)]- \varphi_{-1}(x,
[z,y]), \ \ \text{for} \ \ x, y\in G, z\in I.
\end{equation}

Considering equality \eqref{eq222} for the all triples $(g_1, x_i, g_2)$, where $g_1,g_2\in \{e, f, h\}$,
we obtain the relations:
\[
\begin{array}{ll}
b_{i,m+1}=\beta_{i, m+2}-\frac{i(m+1-i)}{2}\gamma_{i-1, m+1}, &   i=0,1,\dots,m, \\[1mm]
b_{i,m+2}=\frac{i(m+1-i)}{2}\gamma_{i-1, m+2}, &   i=0,1,\dots,m, \\[1mm]
b_{i,m+3}=-\frac{1}{2}\gamma_{i, m+2}-\frac{i(m+1-i)}{2}\beta_{i-1, m+2}, &   i=0,1,\dots,m, \\[1mm]
c_{i,m+1}=\frac{1}{2}\gamma_{i+1, m+1}, &   i=0,1,\dots,m, \\[1mm]
c_{i,m+2}=-\beta_{i, m+2}-\frac{1}{2}\gamma_{i+1, m+2}, &   i=0,1,\dots,m, \\[1mm]
c_{i,m+3}=-\frac{1}{2}\gamma_{i, m+1}+\frac{1}{2}\beta_{i+1, m+2}, &   i=0,1,\dots,m, \\[1mm]
d_{i,m+1}=\frac{m-2i-2}{2}\gamma_{i, m+1}, &   i=0,1,\dots,m, \\[1mm]
d_{i,m+2}=-\frac{m-2i+2}{2}\gamma_{i, m+2}, &   i=0,1,\dots,m, \\[1mm]
d_{i,m+3}=\frac{m-2i}{2}\beta_{i, m+2}, &   i=0,1,\dots,m, \\[1mm]
\end{array}\]
where $\beta_{i,m+2}=\gamma_{i,m+1}=\gamma_{i,m+2}=0$, for $i\leq
-1$ or $i\geq m+1$, which complete the proof of the  proposition.
\end{proof}

Now we formulate the main result of this section.

\begin{thm} Let $L= \mathfrak{sl}_2\dot{+}I$ be the simple Leibniz algebra of Theorem~\ref{simple}. Then $HL^2(L,L)=0$.
\end{thm}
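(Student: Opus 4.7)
The plan is to show $HL^2(L,L)=0$ by a dimension count, comparing $\dim ZL^2(L,L)$ with $\dim BL^2(L,L)$ and verifying equality. Since the inclusion $BL^2(L,L)\subseteq ZL^2(L,L)$ is automatic from $d^2\circ d^1=0$, matching dimensions will give triviality of the quotient.

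First I would assemble $\dim ZL^2(L,L)$ from the graded decomposition
\[
ZL^2(L,L)=ZL^2_{(-1)}\oplus ZL^2_{(0)}\oplus ZL^2_{(1)},
\]
using the propositions already established in this section. For the middle piece, the vanishing $\Phi_{G,I}^{0}=\{0\}$ together with $\dim \Phi_{G,G}^{0}=(\dim G)^2-\dim G=6$ and the explicit count $\dim \Phi_{I,G}^{0}=m^2+2m$ give $\dim ZL^2_{(0)}=m^2+2m+6$. For the other pieces, $\dim ZL^2_{(-1)}=\dim \Phi_{G,I}^{-1}=3(m+1)$, and $\dim ZL^2_{(1)}=\dim \Phi_{G,G}^{1}$ is $3(m+1)$ for $m\ne 2$ and $3(m+1)-1=8$ for $m=2$. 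Summing these gives
\[
\dim ZL^2(L,L)=\begin{cases} m^2+8m+12, & m\ne 2,\\ 31, & m=2.\end{cases}
\]

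Next I would compute $\dim BL^2(L,L)$ from the derivations description. For $m\ne 2$ we have $\dim G\ne \dim I$, and Corollary~\ref{dimbl2sl2} gives directly
\[
\dim BL^2(L,L)=(\dim G+\dim I)^2-\dim G-1=(m+4)^2-4=m^2+8m+12,
\]
matching $\dim ZL^2(L,L)$ exactly. For $m=2$ (i.e.\ $\dim G=\dim I=3$), I would argue separately: by Theorem~\ref{derivationsimple} an extra independent derivation $\triangle\colon G\to I$ appears (nonzero by Schur), and one checks that $R_{G}$, $\alpha=\lambda\id_{|I}$ and $\triangle$ lie in distinct blocks with respect to the decomposition $L=G\oplus I$ and are linearly independent, while $R_a=0$ for $a\in I$ (since $[G,I]=[I,I]=0$ in~$L$). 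Hence $\dim \Der(L)=3+1+1=5$, giving $\dim BL^2(L,L)=(m+4)^2-5=31$, again matching $\dim ZL^2(L,L)$.

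In both cases $\dim ZL^2(L,L)=\dim BL^2(L,L)$, and since $BL^2(L,L)\subseteq ZL^2(L,L)$ we conclude $HL^2(L,L)=0$. The main obstacle I anticipate is the exceptional case $m=2$: here the Schur-type argument in Proposition~\ref{prop34} shaves off one dimension from $\Phi_{G,G}^1$ (reflecting the nontrivial $\triangle$), and the corresponding extra dimension in $\Der(L)$ must line up precisely with this defect. Once one verifies that the linear independence of $R_a$, $\alpha\id_{|I}$ and $\triangle$ persists—which uses that $G=\mathfrak{sl}_2$ has trivial center so no $R_a|_I$ is scalar—the two adjustments cancel and the equality of dimensions continues to hold.
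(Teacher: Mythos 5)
Your proposal is correct and follows essentially the same route as the paper: both establish $\dim ZL^2(L,L)=\dim BL^2(L,L)$ by summing the graded pieces $\Phi_{G,I}^{-1},\Phi_{G,G}^{0},\Phi_{I,G}^{0},\Phi_{G,G}^{1}$ computed in the preceding propositions and comparing with $(m+4)^2-\dim\Der(L)$. Your explicit handling of the exceptional case $m=2$ (where the nonzero $\triangle$ adds one derivation and simultaneously removes one dimension from $\Phi_{G,G}^{1}$) is precisely the adjustment the paper invokes implicitly via Corollary~\ref{dimbl2sl2} and the proof of Proposition~\ref{prop34}.
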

\begin{proof} From Corollary~\ref{dimbl2sl2} and the proof of Proposition~\ref{prop34} we conclude that $\dim BL^2(L, L) =(m+4)^2-4$ for $m\neq2$ and $\dim BL^2(L, L) =(m+4)^2-5$  for $m=2$.

By above  propositions, we conclude that
\[\dim ZL^2(L,L)=\dim(\Phi_{G,I}^{-1}+\Phi_{I,G}^{-1}+\Phi_{I,I}^{-1}+
\Phi_{G,G}^{0}+\Phi_{I,G}^{0}+\Phi_{G,G}^{1})=(m+4)^2-4  \ \text{for} \  m\neq 2\]
 and
\[\dim ZL^2(L,L)=\dim(\Phi_{G,I}^{-1}+\Phi_{I,G}^{-1}+\Phi_{I,I}^{-1}+
\Phi_{G,G}^{0}+\Phi_{I,G}^{0}+\Phi_{G,G}^{1})=(m+4)^2-5 \ \text{for}  \ m=2.\]
\end{proof}

In conclusion we formulate

\textbf{Conjecture.} $HL^2(L,L)=0$ for any complex finite-dimensional simple Leibniz algebra.

\section*{Acknowledgments}

This work was partially supported by Ministerio de Econom\'ia y Competitividad (Spain),
grant MTM2013-43687-P (European FEDER support included); by Xunta de Galicia, grant GRC2013-045 (European FEDER support included) and by a grant from the Simons Foundation.

%\bibliographystyle{elsart-num-sort}
%\bibliography{biblio}

\end{document}